\newcommand{\tmftwo}{E_2^{hG_{24}}}
\newcommand{\dtmf}{E^{hG_{48}}}
\newcommand{\dtmfg}{G_{48}}
\renewcommand{\SS}{\mathbb{S}}
\newcommand{\Gto}{\GG_2^1}
\newcommand{\eG}{E^{h\mathbb{G}_2^1}}
\newcommand{\tmfsd}{E^{hSD_{16}}}
\newcommand{\GGop}{\GG^{\mathrm{op}}\times \GG}
\definecolor{lightgray}{gray}{0.8}
\newcommand{\kappabar}{\bar{\kappa}}
\newcommand{\br}[1]{\llbracket #1 \rrbracket}
\DeclareMathOperator{\Gal}{Gal}
\newcommand{\tmf}{E^{hG_{24}}}
\newcommand{\G}{\mathbb{G}_2}
\newcommand{\tmfg}{G_{24}}
\newcommand{\eS}{E^{h\mathbb{S}_2^1}}
\newcommand{\Sto}{\mathbb{S}_2^1}
\newcommand{\xra}{\xrightarrow}
\newcommand{\ra}{\rightarrow}
\newcommand{\FF}{\mathbb{F}}
\newcommand{\GG}{\mathbb{G}}
\newcommand{\ZZ}{\mathbb{Z}}
\theoremstyle{plain}
\newtheorem{thm}{Theorem}
\newtheorem{theorem}[thm]{Theorem}
\newtheorem{cor}[thm]{Corollary}
\newtheorem{corollary}[thm]{Corollary}
\newtheorem{lemma}[thm]{Lemma}
\newtheorem{prop}[thm]{Proposition}
\newtheorem{proposition}[thm]{Proposition}
\newtheorem*{thmm}{Theorem 1}
\theoremstyle{definition}
\theoremstyle{remark}
\author[Irina Bobkova]{Irina Bobkova}
\address{Department of Mathematics, Texas A\&M University, College Station, TX, 77843}
\email{ibobkova@tamu.edu}
\title[Spanier--Whitehead duality in the $K(2)$-local category at $p=2$]{Spanier--Whitehead duality in the $K(2)$-local category at $p=2$}
\begin{document}
\begin{abstract}
The fixed point spectra of Morava $E$-theory $E_n$ under the action of finite subgroups of the Morava stabilizer group $\GG_n$, and their $K(n)$-local Spanier--Whitehead duals can be used to approximate the $K(n)$-local sphere in certain cases.
  For any finite subgroup $F$ of $\mathbb{G}_2$ at $p=2$ we prove that the $K(2)$-local Spanier--Whitehead dual of the spectrum $E_2^{hF}$ is $\Sigma^{44}E_2^{hF}$.
  These results are analogous to the known results at height 2 and $p=3$.
  The main computational tool we use  is the topological duality resolution spectral sequence for the spectrum $E_2^{h\mathbb{S}_2^1}$ at $p=2$. 
\end{abstract}
\maketitle
\tableofcontents

The Spanier--Whitehead dual $DX$ of a spectrum $X$ is defined as the function spectrum 
$$
DX =F(X, S^0).
$$
In chromatic homotopy theory we are interested in the categories of spectra localized with respect to Morava $K$-theories $K(n)$, for a fixed prime $p$. For a $K(n)$-local spectrum $X$, it is natural to consider the local Spanier--Whitehead dual, which is the function spectrum in the $K(n)$-local category
\[
   DX=F(X, L_{K(n)}S^0).
\]
Some of the most important $K(n)$-local spectra are $L_{K(n)}S^0$ itself and spectra which approximate it. The $K(n)$-local sphere can be thought of as the homotopy fixed points of Morava $E$-theory $E_n$ under the action of Morava stabilizer group $\GG_n$ (see \cite{DH}). At $n=1$ and $2,$ for appropriate finite subgroups $H\subset \GG_n$  the $K(n)$-local sphere can be decomposed in terms of $E_n^{hH}$ as in, for example, \cite{H}, \cite{GHMR}, \cite{BG}, and \cite{Beh}, hence $E_n^{hH}$ are important building blocks of the $K(n)$-local category. 

While it is, of course, true that 
\[
DE_n^{h\GG_n}=F(E_n^{h\GG_n}, E_n^{h\GG_n})\simeq E_n^{h\GG_n},
\]
it turns out that determining $DE_n^{hH}$ is more complicated for finite and other closed subgroups $H\subset \GG_n,$ even already at chromatic height $n=1.$

For example, at $p=2$ and $n=1$ the maximal finite subgroup of $\GG_1\cong \ZZ_2\times C_2$ is $C_2$ 
and the homotopy fixed point spectrum $E_1^{hC_2}$ fits into a fiber sequence \cite{B}
\[
   L_{K(1)}S^0 \to E_1^{hC_2} \to E_1^{hC_2}.
\]
The $K(1)$-local Spanier--Whitehead dual of $E_1^{hC_2}$ is shown to be \cite{HahnMitchell}
\[
   DE_1^{hC_2}=F(E_1^{hC_2}, L_{K(1)}S^0)\simeq \Sigma^{-1}E_1^{hC_2}
\]
thus allowing us to rewrite the fiber sequence above as
\[
  D E_1^{hC_2} \to L_{K(1)}S^0 \to E_1^{hC_2}.
\]
Now let $n=2$ and $p=3$. There exists a fiber sequence (see \cite{Beh})
\begin{equation}\label{eq:Behrens-fiber-seq}
   DQ(2) \to L_{K(2)} S^0 \to Q(2) 
\end{equation}
 where $Q(2)$ is built from $E_2^{h\tmfg}$ and $E_2^{hD_8}$ ($\tmfg$ is the maximal finite subgroup of $\GG_2$ and $D_{8}$ is another finite subgroup isomorphic to the dihedral group of order 8, see \cite{GHMR} for details on structure of these subgroups). Hence $E_2^{h\tmfg}, E_2^{hD_8}$  and their Spanier--Whitehead duals can be thought of as building blocks for $L_{K(2)}S^0$ at $p=3.$ In \cite{Beh} it is proved that
 \begin{align*}
            D\tmftwo&\simeq \Sigma^{44}\tmftwo\\
             DE_2^{hD_8}  &\simeq \Sigma^{44} E_2^{hD_8}.                                                                                                                                                                                                                                                                                                                                                                                                                                                                                                                                                                                                                                                                                                                                                                                                                                  \end{align*}                                                                                                                                                                                                                                                                                                                                                                                                                   
 It was conjectured (\cite{Beh}) that at $n=p=2$ an analog of \eqref{eq:Behrens-fiber-seq} should be true and that there is a decomposition of $L_{K(2)}S^0$ in terms of spectra built from $E_2^{h\tmfg}, E_2^{hC_6}$ and $E_2^{hC_4}$ (see more on these subgroups of $\GG_2$ at $p=2$ in Section \ref{section:Background}) and their Spanier--Whitehead duals. The first step in proving such a result is the identification of the Spanier--Whitehead duals of the relevant spectra.    
 The main result of this paper is the following 2-primary statement which is analogous to the 3-primary case, perhaps hinting at common underlying structures.
\begin{theorem}\label{thm:theorem-one}
  Let $p=2$  and let $F$ be a finite
   subgroup of the Morava stabilizer group $\GG_2.$ Then there is a $K(2)$-local equivalence \[DE_2^{hF}\simeq \Sigma^{44}E_2^{hF}.\]
\end{theorem}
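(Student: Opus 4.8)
The plan is to reduce the theorem to a twisted self-duality statement for Morava $E$-theory and then to compute the resulting shift with the topological duality resolution of \cite{BG}. The starting point is the Gross--Hopkins/Gorenstein duality for homotopy fixed points of finite subgroups: for finite $F<\GG_2$ the spectrum $E_2^{hF}$ is dualizable in the $K(2)$-local category and one has an equivalence of the shape $DE_2^{hF}\simeq\Sigma^{d}\bigl(E_2\langle\det\rangle\bigr)^{hF}$, where $\langle\det\rangle$ twists the action by the reduced-norm character $\det\colon\GG_2\to\ZZ_2^{\times}$ and $d$ records the $K(2)$-local self-duality of $E_2$ itself (expected to be $d=-n^{2}=-4$). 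The theorem then becomes the assertion that this determinant twist is realized on fixed points by a genuine suspension, $\bigl(E_2\langle\det\rangle\bigr)^{hF}\simeq\Sigma^{48}E_2^{hF}$, since $-4+48=44$; the appearance of $48=|\dtmfg|$ is presumably why the shift $44$ is common to both primes.

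I would next reduce to the maximal finite subgroups. Since $\det$ and the invertible determinant sphere $S\langle\det\rangle$ are restricted from all of $\GG_2$, and since every finite subgroup of $\GG_2$ is subconjugate to a copy of $\dtmfg=\tmfg\rtimes\Gal$, it suffices to produce the equivalence for $F=\dtmfg$ (and its conjugate $\dtmfg'$): the case of a general finite $F<\dtmfg$ then follows by base change along $E_2^{h\dtmfg}\to E_2^{hF}$, using that $E_2^{hF}$ is dualizable over $E_2^{h\dtmfg}$ in the $K(2)$-local setting. Thus the whole problem concentrates on identifying $S\langle\det\rangle\wedge E_2^{h\dtmfg}$ with $\Sigma^{48}E_2^{h\dtmfg}$ as $E_2^{h\dtmfg}$-modules.

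The engine for this identification is the duality resolution. I would first establish a self-duality for $E_2^{h\Sto}$: the group $\Sto$ is a compact $2$-adic analytic Poincar\'e duality group of dimension $3$ on which $\det$ is trivial, so $E_2^{h\Sto}$ is $K(2)$-locally self-dual up to a shift. Applying $D$ to the topological duality resolution of \cite{BG},
\[
E_2^{h\Sto}\to E_2^{h\tmfg}\to E_2^{hC_6}\to E_2^{hC_6}\to E_2^{hG_{24}'},
\]
turns the defining finite limit into a finite colimit and yields a resolution of $DE_2^{h\Sto}$ assembled from the duals of the finite layers. Because the list of subgroups $\tmfg,C_6,C_6,G_{24}'$ is symmetric under reversal, the dualized resolution has exactly the shape of the original; the symmetric length-$3$ complex therefore contributes no net shift, and matching the dualized tower against a suspension of the original forces $DE_2^{hG_{24}'}\simeq\Sigma^{t}E_2^{h\tmfg}$, $DE_2^{hC_6}\simeq\Sigma^{t}E_2^{hC_6}$, and $DE_2^{h\Sto}\simeq\Sigma^{t}E_2^{h\Sto}$ for a single value $t$. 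To anchor $t=44$ I would compute the dual of the best-understood layer $E_2^{h\tmfg}$ directly from its homotopy fixed point spectral sequence, locating the $48$-periodicity class that realizes the restricted determinant twist; the resolution then transports this value to the remaining, less accessible pieces.

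The step I expect to be the main obstacle is upgrading these homotopy-level identifications to equivalences of spectra by controlling the $K(2)$-local Picard group at $p=2$. At this prime the Picard groups of the relevant module categories are known to contain exotic invertible objects that are invisible to homotopy groups, so the matching above a priori only determines $DE_2^{hF}$ up to $\Sigma^{44}E_2^{hF}\langle\chi\rangle$ for a potentially nontrivial exotic twist $\chi$. Ruling out such a $\chi$ --- equivalently, showing that the self-map implementing the determinant twist is the honest $48$-fold suspension rather than a merely homotopy-equivalent exotic invertible --- is the crux, and is where the rigidity of the finite length-$3$ resolution, together with explicit control of the differentials and hidden extensions in its spectral sequence, must be used decisively.
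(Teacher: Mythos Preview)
Your outline follows a genuinely different route from the paper --- the Gross--Hopkins/determinant-sphere framework rather than the paper's direct analysis of the function spectrum $F(\tmf,\eS)$ --- but it contains a real gap at the step you yourself flag as the ``anchor.'' You propose to pin down the shift $t$ by ``computing the dual of the best-understood layer $\tmf$ directly from its homotopy fixed point spectral sequence, locating the $48$-periodicity class.'' That is circular: the HFPSS for $(DE)^{h\tmfg}$ has $E_2$-page a shift of $H^*(\tmfg,E_*)$, and the entire content of the theorem is to decide \emph{which} $\Delta^k$ is a permanent cycle there. There is no $48$-periodicity class in $\pi_*\tmf$ at $p=2$ (indeed $d_7(\Delta^2)=\Delta\kappabar\eta^3$ in the ordinary HFPSS, and the periodicity is $192$), so you cannot read off the twist by inspection. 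Likewise, ``matching the dualized tower against a suspension of the original'' presupposes a uniqueness statement for resolutions of $\eS$ that you have not supplied; the objects are symmetric under reversal, but the maps need not be, and without that you cannot conclude that a single $t$ works for all layers.

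What the paper does instead is to avoid the determinant sphere entirely and to extract the shift from a concrete homotopy computation. It maps $\tmf$ \emph{into} the duality tower (rather than dualizing the tower) to obtain a spectral sequence with $E_1^{s,t}=\pi_tF(\tmf,F_s)\Rightarrow\pi_{t-s}F(\tmf,\eS)$. The key numerical input is the vanishing $\pi_{45}E^{hC_6}=\pi_{46}E^{hC_6}=0$ and $\pi_{-1}F(\tmf,\tmf)=0$, which forces the edge map $\pi_{45}F(\tmf,\eS)\to\pi_{45}F(\tmf,\tmf)$ to be surjective. Composing with the unit produces a class of Adams--Novikov filtration at most $5$ with $\kappabar\eta$-multiple nonzero; a short argument with the $d_3$ in the HFPSS then shows this class is detected by $\Delta^{2}g(j)$ with $g(0)$ a unit, whence $\Delta^2$ is a permanent cycle in the HFPSS for $F(\tmf,\eG)$. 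This yields an actual map $\Sigma^{45}\tmf\br{\GG_2/\Gto}\to F(\tmf,\eG)$, so no Picard-group ambiguity arises; the equivalence is then pushed through the fiber sequence $\eG\xra{\psi-1}\eG\to\Sigma L_{K(2)}S^0$ and restricted to smaller $F\le\dtmfg$. In short, the resolution is used not to dualize but to manufacture the single permanent cycle that your ``anchor'' step would need; that is the missing idea in your proposal.
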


We prove this using the  short resolution of a spectrum closely related to $L_{K(2)}S^0$ at $p=2$ constructed in \cite{BG}. We use the associated spectral sequence to identify certain non-zero classes in $\pi_*DE_2^{hF}$, whose existence then forces $DE_2^{hF}\simeq\Sigma^{44}E_2^{hF}.$ 

\subsection*{Acknowledgments} I would like to thank Mark Behrens and Paul Goerss for inspiring conversations and Hans-Werner Henn for his comments on the early versions of this paper. 
I am very  grateful to the anonymous referee for a very thorough reading of several versions of this paper, for pointing out a mistake in the
first version, and for making numerous invaluable comments and suggestions for
improvement.
This material is based upon work supported by the National Science Foundation under Grant No.~ DMS-1440140, while the author was in residence at the Mathematical Sciences Research Institute in Berkeley, CA, during the Spring 2019 semester, Grant No.~DMS-1638352, while the author was a member at the Institute for Advanced Study in Princeton, NJ, and Grant No.~DMS-2005627.

 \section{Background}\label{section:Background}
 \subsection*{Morava \texorpdfstring{$E$}{E}-theory and the Morava stabilizer group}
 
Fix a prime number $p$.
The Morava stabilizer group $\SS_n$ is the group of automorphisms of the height $n$ Honda formal group law $H_n$ over $\mathbb{F}_{p^n}$. It is computed to be
$$\mathbb{S}_n=( W(\mathbb{F}_{p^n})\langle S\rangle/{(aS=Sa^{\sigma}, S^n=p)})^\times.
$$
Here, $W(\FF_{p^n})=\ZZ_p[\omega]$, where $\omega$ is a primitive  $p^{n}-1$-st root of unity, and $\sigma$ is the lift to $W(\FF_{p^n})$ of the Frobenius morphism $\sigma: \FF_{p^n} \xra{(-)^p} \FF_{p^n}.$
The lift to $\SS_n$ of the action of the Galois group $\Gal(\FF_{p^n}/{\FF_{p}})$ defines the (extended) Morava stabilizer group
\[
   \GG_n=\SS_n\rtimes \Gal(\FF_{p^n}/{\FF_p}).
\]
Goerss--Hopkins--Miller theory (see \cite{GH1}, \cite{RezkHMT}) produces a functor
\begin{align*}
   \mathbf{E}: \{\text{Formal group laws}\} &\to \{\mathcal{E}_{\infty}\text{-ring spectra} \}\\
   (k, \Gamma) &\mapsto \mathbf{E}(k, \Gamma)
\end{align*}
where $k$ is a perfect field of characteristic $p$ and $\Gamma$ is a  formal group law of finite height over $k.$
Morava $E$-theory $E_n$ at the prime $p$ is the value of this functor on $(\FF_{p^n}, H_n)$  where $H_n$ is, again, the Honda formal group law. 
Its coefficients are 
$$
(E_n)_* \simeq W(\mathbb{F}_{p^n}) \llbracket u_1, \ldots u_{n-1} \rrbracket [u^{\pm 1}],
$$
where $|u_i|=0$ and $|u^{-1}|=2.$
By \cite{DH}, the homotopy fixed points of action of $\GG_n$ on $E_n$ recover the $K(n)$-local sphere
$
   L_{K(n)}S^0\simeq E_n^{h\GG_n}.
$
For any closed subgroup $H$ of $\GG_n$ we can form the homotopy fixed point spectrum $E_n^{hH}$ 
and by the results of \cite{DH}, there exists a fixed point spectral sequence
\[
   E_2^{*,*}=H^*_c(\GG_n, E_*E_n^{hH})\cong H^*(H, \pi_*E_n) \Longrightarrow \pi_*E_n^{hH}.
\]

Finite and other closed subgroups of $\GG_n$ play an important role in computations because they are often much easier to work with but still carry significant information. We will next  introduce several subgroups of interest to us at $n=2.$

We can write each element of $\GG_2$ as a pair 
$$(a+bS, \phi^e); \;\;\;\;\;\; a \in (W(\mathbb{F}_{p^2}))^{\times},\, b \in W(\mathbb{F}_{p^2}),\, e\in \{0,1\}$$ 
where $\phi$ is the Frobenius morphism,
and define the norm map by
\begin{equation*}
\begin{split}
\GG_2 &\xrightarrow N W(\FF_{p^2})^{\times} \rtimes \Gal(\FF_{p^n}/{\FF_p})
\\
(a+bS, \phi^e) &\mapsto (a\phi(a)-pb\phi(b), \phi^e).
\end{split}
\end{equation*}
 It is easy to check that the map $N$ takes values in $\ZZ_p^{\times}\times \Gal(\FF_{p^2}/{\FF_p}).$  
The group $\Gto$ is defined as the kernel of the reduced norm map
$$1 \rightarrow \Gto \rightarrow \GG_2 \xrightarrow{N}
\ZZ_p^{\times}\times \Gal(\FF_{p^2}/{\FF_p})\to
\mathbb{Z}_p^{\times}/{F}\simeq \mathbb{Z}_p \rightarrow 1,$$
where $F=C_2$ at $p=2$ and $F=C_{p-1}$ at $p\neq 2$ is the maximal finite subgroup of $\ZZ_p^{\times}.$
We also define $\SS_2^1:=\GG_2^1\cap \SS_2.$

\subsection*{Finite subgroups and short resolutions.}
The results of this paper concern the case $n=p=2.$ In order to explain our motivation for this project, we will first review some details about the $n=2$, $p=3$ case. We will write $E=E_2.$  
\subsubsection*{$\mathbf{p=3}$} 
It can be shown that $\SS_2$ contains a cyclic subgroup $C_3$ at $p=3;$ it is also easy to see that it contains a subgroup $C_8=\FF_9^{\times}$ generated by a primitive 8-th root of unity $\omega.$ Let $C_4$ be the subgroup generated by $\omega^2.$
The maximal finite subgroup $\tmfg$ of  $\GG_2$ has order $24$ and 
can be defined by the non-split group extension
\[
   1 \to C_3\rtimes C_4 \to \tmfg \to  \Gal(\FF_9/{\FF_3}) \to  1.
\]
 The semidihedral group 
$SD_{16}=(\FF_9)^\times\rtimes \Gal(\FF_9/{\FF_3})$ 
is the second finite subgroup of interest, note that it contains the dihedral group $D_8.$ 
See \cite[Section 1]{GHMR} for more details on the structure and generators of these subgroups.

The fixed point spectra $E^{hSD_{16}}$ and $\tmf$ are crucial for understanding the $K(2)$-local category at $p=3$ due to the existence of short resolutions of $L_{K(2)}S^0.$ Namely, in \cite{GHMR} the authors show that there exists a resolution of $L_{K(2)}S^0$
\begin{multline}\label{eq:ghmr-resolution}
L_{K(2)}S^0 \to \tmf \to \Sigma^8 \tmfsd\vee \tmf \to \\
\to \Sigma^8\tmfsd \vee \Sigma^{40}\tmfsd \to \Sigma^{40} \tmfsd \vee \Sigma^{48}\tmf \to \Sigma^{48}\tmf
\end{multline}   
Behrens \cite{Beh} used this resolution, and related calculations to show that there exists a fiber sequence
\[
   DQ(2) \to L_{K(2)}S^0 \to Q(2)
\]
where $Q(2)$ is built from $E^{hSD_{16}}$ and $\tmf.$ This explains the apparent self-duality of the resolution \eqref{eq:ghmr-resolution} and the presence of suspensions in it. Namely, the presence of $\Sigma^{48}\tmf$ in the resolution is related to the facts that $D\tmf\simeq \Sigma^{44}\tmf$ and that the resolution has length 4.  
\subsubsection*{$\mathbf{p=2}$}
It can be shown that $\SS_2$ at $p=2$ contains two elements of order four, $i$ and $j,$ which generate a subgroup $Q_8<\SS_2,$ on which $C_3=\FF_4^{\times}$ acts by permuting $i,j$ and $ij.$
There is one isomorphism class of  nonabelian, maximal, finite subgroups of $\SS_2$, given by the binary tetrahedral group $\tmfg=Q_8\rtimes \FF_4^{\times}$ (see \cite[Corollary 1.5]{Hewett}) and $G_{48}:=G_{24}\rtimes \Gal(\FF_4/{\FF_2}) \subseteq \GG_2$ (for details on these subgroups see, for example, \cite[Section 2]{HennCentralizer}). Note that the group $\tmfg$ at $p=2$ is not the same as the maximal finite subgroup of $\GG_2$ at $p=3,$ even though the usual notation is the same. We will also use subgroups $C_2=\{\pm 1\}$ and $C_6=\{\pm 1\}\times \FF_4^{\times}.$ 

The analog of \eqref{eq:ghmr-resolution} at $p=2$ is the following resolution of $\eS$ \cite{BG}
\begin{equation}\label{eq:BG-resolution}
  \eS \to \tmf \to E^{hC_6} \to \Sigma^{48}E^{hC_6} \to \Sigma^{48}\tmf.
\end{equation}
Both resolutions \eqref{eq:ghmr-resolution} and \eqref{eq:BG-resolution} have the property that all possible Toda brackets formed from the maps in the resolutions are zero. Hence they refine to towers of fibrations and give rise to tower spectral sequences. Namely, \eqref{eq:BG-resolution} refines to a tower of fibrations
	{\small{\begin{equation}\label{eq:my-tower}
	   	\xymatrix{
			\mathfrak{F}_3=\Sigma^{45}\tmf \ar[r] & E^{h\mathbb{S}_2^1} \ar[d] \\
\mathfrak{F}_2=\Sigma^{46}E^{hC_6}\ar[r] & X_2 \ar[d]\\
\mathfrak{F}_1=\Sigma^{-1}E^{hC_6}\ar[r] & X_1 \ar[d]\\
\mathfrak{F}_0=\tmf \ar[r]^-{\simeq} &\tmf.}
	\end{equation}}}
 and we have a tower spectral sequence
 \[
    E_1^{t,s}=\pi_t\mathfrak{F}_s \Longrightarrow \pi_{t-s}\eS.
 \]
 
We can map $\tmf$ (or any other spectrum) into the tower \eqref{eq:my-tower} and arrive at a spectral sequence computing the homotopy of the function spectrum
 \begin{equation}\label{eq:towerSS-function-spectrum}
        E_1^{t,s}=\pi_tF(\tmf,\mathfrak{F}_s) \Longrightarrow \pi_{t-s}F(\tmf, \eS).
 \end{equation} 

\section{Action of \texorpdfstring{$\GG_n$}{Gn} on function spectra}\label{section:actions}

In this section we work at an arbitrary chromatic height $n$ and  write $\GG=\GG_n$ and $E=E_n$ in order to simplify the notation. 
For an element $g\in \GG$ and $\alpha \in \pi_nF(E,E)$ let $t_g$ and $s_g$ denote the actions on the target and source:
\begin{align}
t_g(\alpha) &= (\Sigma^n E\xra{\alpha} E \xra{g}E) =g \circ\alpha \label{eq:tg}\\
s_g(\alpha) &= (\Sigma^n E \xra{g} \Sigma^n E\xra{\alpha} E) =\alpha \circ g. \label{eq:sg}
\end{align}
Assuming that $\GG$ acts on $E$ on the left, we can see that $t_g$ is a left action of $\GG$ on $F(E,E)$, and $s_g$ is a right action of $\GG$ on $F(E,E)$, hence we now have a left action of $\GGop$ on $F(E,E)$.

There exist at least two ways to define a $\GGop$ action on $E_*\llbracket\GG\rrbracket$ so that 
there was a $\GGop$ equivariant isomorphism $E_*\llbracket\GG\rrbracket \cong \pi_*F(E,E)$, with the action on $\pi_*F(E,E)$ as in \eqref{eq:tg} and \eqref{eq:sg}. 
Non-equivariant and $\GG$-equivariant versions of this isomorphism are also discussed in \cite{DH}, \cite{StricklandGrossHopkins}, \cite{BehrensDavis}, \cite{GHMR},
and \cite{HoveyEStar}.

\subsection{First isomorphism}
In this paper we will use the isomorphism discussed below in Section \ref{sec:second-equivalence}, but another equivariant isomorphism is used more often, and we would like to write down some details about it first.

Let $(a \in E_*,\gamma\in \GG)$ be an element of $E_*\llbracket\GG\rrbracket$. For $g\in \GG$, consider the two actions on $E_*\llbracket\GG\rrbracket$: the right action given by
\begin{equation}
  r_g(a,\gamma)=(a, \gamma g),
\end{equation}
and the left action given by ($g.a$ denotes the action of $g$ on $a\in E_*$)
\begin{equation}
  l_g(a, \gamma)=(g.a, g\gamma).
\end{equation}
\begin{thm}\label{thm:first-equivalence}
  There exists a $\GGop$ equivariant isomorphism of $E_*$-algebras
   \[\phi: E_*\llbracket\GG\rrbracket \to \pi_*F(E,E),\]
such that
\[
\phi(r_g(a,\gamma))=s_g(\phi(a,\gamma))\]
\[
\phi(l_g(a, \gamma))=t_g(\phi(a,\gamma)).\]
\end{thm}
\begin{proof}
The non-equivariant version of this statement is proved in \cite[Theorem 5.5]{HoveyEStar}, and the equivariant version is discussed in \cite{GHMR}. Hovey defines the map $\phi$ as
\begin{align*}
   \phi: E_*\llbracket\GG\rrbracket &\to \pi_*F(E,E)\\
   ( S^n\xra{a} E,\gamma) &\mapsto (\Sigma^n E \xra{\gamma}\Sigma^nE \xra{a} E\wedge E\xra{\mu} E)
\end{align*}
and proves that it is an isomorphism of $E_*$-algebras. In order to prove the equivariant statement, 
let $g\in \GG$. Then we have
\begin{align*}
s_g(\phi(a,\gamma))&=s_g(\Sigma^n E \xra{\gamma}\Sigma^nE \xra{a} E\wedge E\xra{\mu} E)\\
 &= (\Sigma^n E \xra{g}\Sigma^n E \xra{\gamma}\Sigma^nE \xra{a} E\wedge E\xra{\mu} E)\\
&=(\Sigma^n E \xra{ \gamma g}\Sigma^nE \xra{a} E\wedge E\xra{\mu} E)=\phi(a, \gamma g)=\phi(r_g(a,\gamma)).
\qedhere
\end{align*}
For the other two actions we have:
\begin{align*}
t_g(\phi(a,\gamma))&=t_g(\Sigma^n E \xra{\gamma}\Sigma^nE \xra{a} E\wedge E\xra{\mu} E)\\
 &= (\Sigma^n E \xra{\gamma}\Sigma^nE \xra{a} E\wedge E\xra{\mu} E \xra{g} E)\\
&=(\Sigma^n E \xra{g \gamma}\Sigma^nE \xra{g.a} E\wedge E\xra{\mu} E)=\phi(g.a, g\gamma)=\phi(l_g(a,\gamma))
\end{align*}
\end{proof}
\subsection{Second isomorphism}\label{sec:second-equivalence}
In this section we would like to consider a different action of $\GGop$ on $E_*\llbracket\GG\rrbracket$. Namely, let the left action $L_g$ of $\GG$ and the right action $R_g$ of $\GG$ be as follows:
\begin{align*}
L_g(a,\gamma)&=(a, g\gamma)\\
R_g(a,\gamma)&=(g^{-1}.a,\gamma g).
\end{align*}
\begin{thm}\label{thm:second-equivalence}
There exists a $\GGop$ equivariant isomorphism of $E_*$-algebras
   \[\psi: E_*\llbracket\GG\rrbracket \to \pi_*F(E,E),\]
such that 
\[
\psi(L_g(a,\gamma))=t_g(\psi(a,\gamma)).
\]
\[
\psi(R_g(a, \gamma))=s_g(\psi(a,\gamma))
\]
\end{thm}
\begin{proof}
We define the map $\psi$ as (following \cite[p.1029]{StricklandGrossHopkins})
\begin{align*}
   \psi: E_*\llbracket\GG\rrbracket &\to \pi_*F(E,E)\\
   ( S^n\xra{a} E,\gamma) &\mapsto (\Sigma^n E\xra{a}E\wedge E \xra{\mu} E \xra{\gamma}E)
\end{align*}
Since
$
   \psi(a,\gamma)=\phi(\gamma.a, \gamma)
$,
$\psi$ is an isomorphism of $E_*$-algebras. 
We will check the equivariant part of the statement, just as in Theorem \ref{thm:first-equivalence}:
\begin{align*}
t_g(\psi(a,\gamma))&=t_g(\Sigma^nE \xra{a} E\wedge E\xra{\mu} E \xra{\gamma} E)\\
 &= (\Sigma^nE \xra{a} E\wedge E\xra{\mu} E \xra{\gamma} E \xra{g} E)\\
&=(\Sigma^nE \xra{a} E\wedge E\xra{\mu} E \xra{g \gamma} E)=\psi(a, g\gamma) = \psi(L_g(a, \gamma))
\end{align*}
and
\begin{align*}
s_g(\psi(a,\gamma))&=s_g(\Sigma^nE \xra{a} E\wedge E\xra{\mu} E \xra{\gamma} E)\\
 &= (\Sigma^n E \xra{g}\Sigma^nE \xra{a} E\wedge E\xra{\mu} E \xra{\gamma} E)\\
&=(\Sigma^nE \xra{g^{-1}.a} E\wedge E\xra{\mu} E \xra{\gamma g} E)=\psi(g^{-1}.a, \gamma g) = \psi(R_g(a, \gamma)).
\end{align*}
The hardest part of this is the third equality which can be visualized as follows (we are using that $\gamma$ and $\gamma g$ act by ring maps)
\[
\gamma .(a(g.x))=(\gamma .a) (\gamma.(g.x))=((\gamma g g^{-1}).a )((\gamma g).x)=\gamma g((g^{-1}.a)x).
\qedhere
\]
\end{proof}
Now assume we would like to understand $\pi_*F(E^{hK}, E^{hH})$ for $H, K\subseteq \GG$.  The isomorphism $\phi$ is often better suited to this task if we first take the fixed points with respect to $K$, and then the fixed points with respect to the diagonal action of $H$ on $\pi_*F( E^{hK}, E)$. This is the approach used in \cite{GHMR} and \cite{BehrensDavis}. But if we wanted to take the fixed points in the other order, it might be more advantageous to use the isomorphism $\psi$. This is the approach we will take in this paper, inspired by \cite{StricklandGrossHopkins}, where $\psi$ was used to compute $DE$.

\section{Spanier--Whitehead dual of \texorpdfstring{$\tmf$}{tmf}: first steps}
For the rest of the paper we work in the $K(2)$-local category at $p=2$ and write $E=E_2.$
We begin with a recollection of computations from \cite[Prop. 16]{StricklandGrossHopkins}, where it was proved that there is an equivalence of $E$-modules (for $n=2$ and any prime)
$$
F(E, E^{h\mathbb{G}_2})\simeq \Sigma^{-4}E,
$$ 
inducing a $\GG_2$-equivariant isomorphism on homotopy groups. 
This can be shown by computing $H^*(\mathbb{G}_2, E^*E)$ and using the Devinatz--Hopkins spectral sequence \cite{DH}
\begin{equation}\label{eq:DevinatzHopkinsSS}
   E_2^{s,t}=H^s(G, \pi_tF(X,Z)) \Longrightarrow \pi_{t-s}F(X, Z^{hG}).
\end{equation}
We will emulate Strickland's analysis for the group $\GG_2^1 \subset \mathbb{G}_2$ instead of $\GG_2$. One of the key facts we need is that $\GG_2$ contains an open Poincar\'e duality subgroup of dimension 4.
A profinite $p$-group $G$ is called a Poincar\'e duality group of dimension $k$ if $G$ has cohomological dimension $k$ and 
   \[
      H^n_c(G, \ZZ_p \llbracket G \rrbracket )=
      \begin{cases}
         \ZZ_p &n=k\\
         0 &n\neq k,
      \end{cases}
   \]
  where the action used to define group cohomology is the  natural left action of $G$ on the topological ring $\ZZ_p\llbracket G\rrbracket$.
   A group $H$ is called a \emph{virtual} Poincar\'e duality group if it possesses a finite-index subgroup $G$ which is a Poincar\'e duality group.
The groups $\GG_2$ and $\Gto$ are not Poincar\'e duality groups, but they are virtual Poincar\'e duality groups. 
\begin{lemma}\label{lemma:poincare}
   We have 
   \[
     H^n_c(\GG_2^1, \ZZ_2 \llbracket \Gto \rrbracket )=
   \begin{cases}
   \ZZ_2 &n=3\\
   0 &n\neq 3.
   \end{cases} 
   \]
\end{lemma}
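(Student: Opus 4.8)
The plan is to emulate Strickland's computation of $H^*_c(\GG_2,\ZZ_2\llbracket\GG_2\rrbracket)$ (\cite[Prop.~16]{StricklandGrossHopkins}), replacing the $4$-dimensional open Poincar\'e duality subgroup of $\GG_2$ by a $3$-dimensional one sitting inside $\Gto$. The group $\Gto$ is a compact $2$-adic analytic group which does contain $2$-torsion (already $C_2=\{\pm1\}$), so it is genuinely not a Poincar\'e duality group; but it has an open normal subgroup $U$ that is torsion-free, and for such a subgroup Lazard's theory identifies $U$ as a Poincar\'e duality group of dimension equal to $\dim U$. The first task is therefore to pin down this dimension and produce $U$.

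For the dimension count, recall that $\SS_2$ is a compact $2$-adic analytic group of dimension $n^2=4$, and since $\Gal(\FF_4/\FF_2)$ is finite, $\dim\GG_2=4$ as well. The reduced norm exhibits $\Gto$ as the kernel of a surjection of $\GG_2$ onto $\ZZ_2$, so $\dim\Gto=4-1=3$. Every compact $2$-adic analytic group contains an open normal uniform pro-$2$ subgroup, which is in particular torsion-free; I would take $U\le\Gto$ to be such a subgroup, so that $\dim U=3$ and $U$ is a Poincar\'e duality group of dimension $3$, giving
\[
  H^n_c(U,\ZZ_2\llbracket U\rrbracket)=\begin{cases}\ZZ_2 & n=3\\ 0 & n\neq 3.\end{cases}
\]

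Next I would descend from $U$ to $\Gto$. Since $U$ is open, $\ZZ_2\llbracket\Gto\rrbracket$ is a free $\ZZ_2\llbracket U\rrbracket$-module of rank $[\Gto:U]$; decomposing it along the cosets and applying Shapiro's lemma yields
\[
  H^n_c(U,\ZZ_2\llbracket\Gto\rrbracket)\cong\begin{cases}\ZZ_2[\Gto/U] & n=3\\ 0 & n\neq 3,\end{cases}
\]
where $Q:=\Gto/U$ is finite and acts on $\ZZ_2[Q]$ by the residual translation action. Feeding this into the Lyndon--Hochschild--Serre spectral sequence
\[
  E_2^{p,q}=H^p\big(Q,\,H^q_c(U,\ZZ_2\llbracket\Gto\rrbracket)\big)\Longrightarrow H^{p+q}_c(\Gto,\ZZ_2\llbracket\Gto\rrbracket),
\]
the $E_2$-page is concentrated in the row $q=3$. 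The crucial point is that $\ZZ_2[Q]$ is the regular representation, i.e.\ induced from the trivial subgroup, so Shapiro's lemma for the finite group $Q$ gives $H^p(Q,\ZZ_2[Q])=\ZZ_2$ for $p=0$ and $0$ for $p>0$; this is exactly what rescues concentration despite $Q$ having order divisible by $2$. The spectral sequence therefore collapses onto $E_2^{0,3}=\ZZ_2$, which establishes the lemma.

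The main obstacle is the structural input rather than the spectral-sequence bookkeeping: one must verify that $\Gto$ is $2$-adic analytic of dimension exactly $3$ (equivalently, that the reduced norm cuts the dimension by one) and that it contains a torsion-free open normal subgroup realizing Poincar\'e duality in dimension $3$. Once these facts about $\SS_2^1$ and $\Gto$ are in hand, the only apparent danger---that the $2$-torsion in the finite quotient $Q$ could spread the cohomology across infinitely many degrees---is neutralized by the observation that the relevant $Q$-module is the free module $\ZZ_2[Q]$, which has cohomology only in degree $0$.
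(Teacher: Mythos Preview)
Your argument is correct and rests on the same idea as the paper's proof: reduce to an open torsion-free pro-$2$ subgroup of $\Gto$ that is a Poincar\'e duality group of dimension~$3$. The paper carries out this reduction in a single stroke, observing that $\ZZ_2\llbracket\Gto\rrbracket\cong\mathrm{Ind}_{K^1}^{\Gto}\ZZ_2\llbracket K^1\rrbracket$ for the open subgroup $K^1$ and invoking Shapiro's lemma once to get $H^*_c(\Gto,\ZZ_2\llbracket\Gto\rrbracket)\cong H^*_c(K^1,\ZZ_2\llbracket K^1\rrbracket)$ immediately. Your detour through the Lyndon--Hochschild--Serre spectral sequence, followed by identifying the $Q$-module as the regular representation $\ZZ_2[Q]$ and applying Shapiro for the finite quotient, is a legitimate unpacking of that same Shapiro step; it is longer but has the virtue of making explicit why the $2$-torsion in $Q=\Gto/U$ causes no trouble. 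One small quibble: in your first step the computation of $H^*_c(U,\ZZ_2\llbracket\Gto\rrbracket)$ is really just ``cohomology commutes with finite direct sums'' together with Poincar\'e duality for $U$, not Shapiro's lemma per se.
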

\begin{proof}
   The group $\GG_2$ contains an open Poincar\'e duality subgroup $K$ of dimension 4 and $\GG_2^1$ contains an open Poincar\'e duality subgroup $K^1=K\cap \Gto$ of dimension 3. For details on generators of $K$ and $K^1$ and their properties, see \cite[Section 2]{BE}. 
 Open subgroups of a profinite group are precisely those closed subgroups which have finite index.
   In fact, $K^1$ fits into a short exact sequence
   \[
      1 \to K^1 \to \Sto \to \tmfg \to 1.
   \]
   Then we have an isomorphism supplied by the Shapiro's lemma
   \[
      H^*_c(\Gto, \ZZ_2 \llbracket \Gto \rrbracket )\cong H^*_c(K^1, \ZZ_2 \llbracket K^1 \rrbracket ).
   \qedhere\]
\end{proof}

\begin{lemma}\label{lem:Lemma5}
\begin{enumerate}
\item There exists a $\GG_2$-equivariant isomorphism
\[
 \pi_*F(E,  E^{h\GG_2^1}) \cong \pi_*\Sigma^{-3}E \br{\G/{\GG_2^1}},
\]
where the action of $\GG_2$ on the left hand side is on the source in an element of $\pi_*F(E, \eG)$ and the action of $g\in \GG_2$ on the right hand side is given by $g.(a, \gamma \GG_2^1)=(g^{-1}a, (\gamma g)\GG_2^1)$ for $a\in E_*$ and a coset $\gamma\GG_2^1$. These are right actions of $\GG_2$.
\item The isomorphism  of (1) is also equivariant with respect to the left action of the group $\GG_2/{\GG_2^1}\cong \ZZ_2$, which is the residual action on the target in the function spectrum, and the natural action on $\GG_2/{\GG_2^1}$ on the right hand side.
\end{enumerate}
\end{lemma}
\begin{proof}
We will use the spectral sequence \eqref{eq:DevinatzHopkinsSS}
and we 
need to compute 
\[
E_2^{*,*}\cong H^*_c(\Gto, \pi_*F(E, E)).
\]
Our starting point is the isomorphism of Theorem~\ref{thm:second-equivalence}:
$
 \pi_*F(E,E) \cong E_* \llbracket \GG_2 \rrbracket  
$,
under which the action on the target in  $\pi_*F(E,E)$ corresponds to the action which we called $L_g$ in Section \ref{sec:second-equivalence}, namely 
$g.(a,\gamma)=(a, g\gamma).$

There is an isomorphism of $\GG_2^1$-modules 
\begin{equation}\label{eq:first-decomp}
E_*\llbracket\mathbb{G}_2\rrbracket \cong E_*\llbracket\mathbb{G}_2^1\rrbracket \widehat{\otimes}_{E_*}E_*\llbracket\mathbb{G}_2/{\GG_2^1}\rrbracket
\end{equation}
where $E_*\llbracket\mathbb{G}_2/{\GG_2^1}\rrbracket$ has trivial action. 
This gives an isomorphism 
\[
H^*_c(\GG_2^1, E_*\llbracket\mathbb{G}_2\rrbracket)\cong H^*_c(\GG_2^1, E_*\llbracket\GG_2^1\rrbracket)\widehat{\otimes}_{E_*} E_*\llbracket\mathbb{G}_2/{\GG_2^1}\rrbracket
\]
and we are reduced to computing $H^*_c(\GG_2^1, E_*\llbracket\GG_2^1\rrbracket)$. But since $\GG_2^1$ acts trivially on $E_*$ (by definition of $L_g$), we have an isomorphism of $\mathbb{G}_2^1$-modules 
$$
E_* \llbracket \GG_2^1\rrbracket\cong  \ZZ_2\llbracket\mathbb{G}_2^1\rrbracket \widehat{\otimes}_{\ZZ_2} E_*$$
and $H^*_c(\GG_2^1, E_*\llbracket \GG_2^1\rrbracket)=H^*_c(\GG_2^1, \ZZ_2\llbracket \GG_2^1\rrbracket)\widehat{\otimes}_{\ZZ_2} E_*$.

Hence we have 
\[
 H^n_c(\Gto, E^tE)=
  \begin{cases}
   E_t\llbracket \GG_2/{\GG_2^1}\rrbracket, &n=3\\
   0, &n\neq 3.
   \end{cases}
   \]

Substituting these results into the spectral sequence \eqref{eq:DevinatzHopkinsSS}
 we see that it cannot have any differentials or extensions due to sparseness. Hence it collapses and we have an isomorphism of homotopy groups
$$
\pi_* F(E, E^{h\Gto})\cong \pi_* \Sigma^{-3}E\br{\G/{\Gto}}.
$$
Theorem \ref{thm:second-equivalence} implies that 
 this isomorphism
is equivariant with respect to the right action of $\GG_2$: on the source in the function spectrum and induced from $R_g$ on $\pi_*\Sigma^{-3}E \llbracket \G/{\GG_2^1} \rrbracket$. 

To prove the second statement, we keep track of the action of $\GG_2/{\GG_2^1}$ at each step, starting with \eqref{eq:first-decomp}, where the action is the natural action of the group on the group ring $E_*\llbracket \G/{\GG_2^1} \rrbracket$. This action extends to the action on group cohomology, and on the $E_2=E_{\infty}$ page of the spectral sequence.
\end{proof}
In this paper we want to identify the function spectrum $F(\dtmf, E^{h\GG_2})$ and we will do that by first understanding $F(\tmf, \eG).$ The next lemma allows us to think about the latter function spectrum as a homotopy fixed point spectrum.
\begin{lemma}\label{lem:Tate_vanishes}
Let $H$ be any finite subgroup of $\GG_2.$ Then we have an equivalence
$$
 F(E^{hH}, E^{h\mathbb{G}_2^1})\simeq F(E, E^{h\mathbb{G}_2^1})^{hH}.
$$
\end{lemma}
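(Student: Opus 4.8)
The plan is to reduce the statement to the vanishing of the $K(2)$-local Tate construction for the finite group $H$, which forces the homotopy fixed points and homotopy orbits of the $H$-action on $E$ to agree after $K(2)$-localization. Since $E^{h\GG_2^1}$ is $K(2)$-local, the functor $F(-, E^{h\GG_2^1})$ sends $K(2)$-local equivalences to equivalences and carries homotopy colimits in the source variable to homotopy limits; combining these two observations yields the claim formally.

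First I would record that $Z := E^{h\GG_2^1}$ is $K(2)$-local, so that for any $X$ one has $F(X, Z) \simeq F(L_{K(2)} X, Z)$ by the universal property of localization, and moreover $F(X,Z)$ is itself $K(2)$-local (if $A$ is $K(2)$-acyclic then so is $A \wedge X$, hence $F(A, F(X,Z)) \simeq F(A\wedge X, Z)\simeq \ast$). Next I would invoke the Tate vanishing theorem of Greenlees--Sadofsky (equivalently, the $K(n)$-local ambidexterity of Hopkins--Lurie): for the finite group $H$ acting on the $K(2)$-local spectrum $E$, the $K(2)$-local Tate spectrum $E^{tH}$ vanishes, so the norm map induces a $K(2)$-local equivalence
\[
   L_{K(2)} E_{hH} \xra{\simeq} E^{hH}.
\]
Applying $F(-, Z)$ and using the first step then gives $F(E^{hH}, Z) \simeq F(E_{hH}, Z)$.

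To finish, I would write the homotopy orbits as a homotopy colimit $E_{hH} \simeq \hocolim_{BH} E$ and use that $F(-, Z)$ converts this colimit into the corresponding homotopy limit:
\[
   F(E_{hH}, Z) \simeq F(\hocolim_{BH} E, Z) \simeq \holim_{BH} F(E, Z) = F(E, Z)^{hH}.
\]
Chaining these equivalences produces $F(E^{hH}, E^{h\GG_2^1}) \simeq F(E, E^{h\GG_2^1})^{hH}$, as desired.

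The main obstacle is justifying the norm equivalence, i.e.\ the vanishing of the $K(2)$-local Tate spectrum $E^{tH}$ for the relevant finite subgroups $H < \GG_2$; this is precisely where the finiteness of $H$ and $K(n)$-locality are used, and I would need to check that the cited ambidexterity statement applies to the equivariant $E$-module setting at hand rather than reprove it from scratch. Everything else is formal manipulation of function spectra and of homotopy (co)limits.
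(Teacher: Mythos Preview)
Your proposal is correct and follows essentially the same route as the paper: invoke the $K(2)$-local vanishing of $E^{tH}$ to identify $E^{hH}$ with $E_{hH}$ in the $K(2)$-local category, and then use that $F(-,Z)$ turns homotopy orbits into homotopy fixed points. You supply more detail (the $K(2)$-locality of $Z$ and the colimit-to-limit passage) than the paper's two-line proof, but the argument is the same.
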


\begin{proof}
Since the $K(2)$-localization of the Tate spectrum $E^{tH}$ vanishes, the homotopy fixed point spectrum $E^{hH}$ and the homotopy orbit spectrum $E_{hH}$ are equivalent. The lemma then follows from the equivalences
\[
F(E^{hH}, E^{h\Gto})\simeq F(E_{hH}, E^{h\Gto})\simeq F(E, E^{h\Gto})^{hH}.\qedhere
\]
\end{proof}

The homotopy of $F(E, E^{h\GG_2^1})^{h\tmfg}$ can now be computed using the fixed point spectral sequence 
\[
   E_2^{s,t}=H^s(\tmfg, \pi_tF(E, E^{h\mathbb{G}_2^1})) \Longrightarrow \pi_{t-s}F(E, \eG)^{h\tmfg}.
\]
We will later show that we actually only need to know very little information about some permanent cycles in order to compute this spectral sequence completely.
\subsection*{Fixed point spectral sequence for $\pi_*\tmf$}

Here we will recall some basic facts about the homotopy fixed point spectral sequence
\begin{equation}\label{eq:tmf-ss}
   E_2^{s,t}=H^s(\tmfg, \pi_tE) \Longrightarrow \pi_{t-s}\tmf.
\end{equation}
For more details see \cite[Theorem 18.2]{Rezk512}, \cite{Tilman} or \cite[Section 2.3]{BG}. 

There is an isomorphism 
$$
H^0(\tmfg, E_*)\cong W(\FF_4) \llbracket j \rrbracket [c_4, c_6,\Delta^{\pm 1}]/{(c_4^3-c_6^2=(12)^3\Delta, \Delta j=c_4^3)}
$$
and 
\[
   H^*(\tmfg, E_*)\cong H^0(\tmfg, E_*)[\eta, \nu, \mu, \epsilon, \kappa, \kappabar]/R
\]
where $R$ is the ideal generated by the following relations:
\begin{gather*}
2\eta=2\mu=2\epsilon=2\kappa=4\nu=8\kappabar=0;\\
\eta^2\kappa=\eta\nu= 2\nu^2=\nu^4=0;\\
\eta\epsilon=\nu^3, \,\,\nu\epsilon=\epsilon^2=0,\,\, \nu^2\kappa=4\kappabar, \,\, \epsilon\kappa=\kappa^2=0;\\
\mu\nu=c_4\nu=c_6\nu=0,\,\, \mu\epsilon=c_4\epsilon=c_6\epsilon=0,\,\, \mu\kappa=c_4\kappa=c_6\kappa=0;\\
\mu^2=\eta^2c_4,\,\, \mu c_4=\eta c_6,\,\, \mu c_6=\eta c_4^2,\,\,
c_4\kappabar=\eta^4\Delta,\,\, c_6\kappabar=\eta^3\mu\Delta.
\end{gather*}
We include for reference the chart for the $E_2$ page of the spectral sequence \eqref{eq:tmf-ss} in 
Figure \ref{figure:tmf-shifted}. This is Figure 3 from \cite{BG}. The notation in Figure \ref{figure:tmf-shifted} is as follows: $\Box=W(\FF_4) \llbracket j \rrbracket $, $\Circle=\FF_4 \llbracket j \rrbracket $, $\otimes = W(\FF_4) \llbracket j \rrbracket /(8,2j)$ generated by a class of the form $\Delta^i\kappabar^j,$ a bullet denotes a class of order 2 and a circled bullet is a class of order 4. The solid lines of slope 1 indicate multiplication by $\eta$ and lines  of slope 1/3 indicate multiplication by $\nu.$ A dashed line indicates
that $x\eta=jy$, where $x$ and $y$ are generators in the appropriate bidegrees. 
\begin{figure}[h]
 \includegraphics[page=1, width=0.8\textwidth]{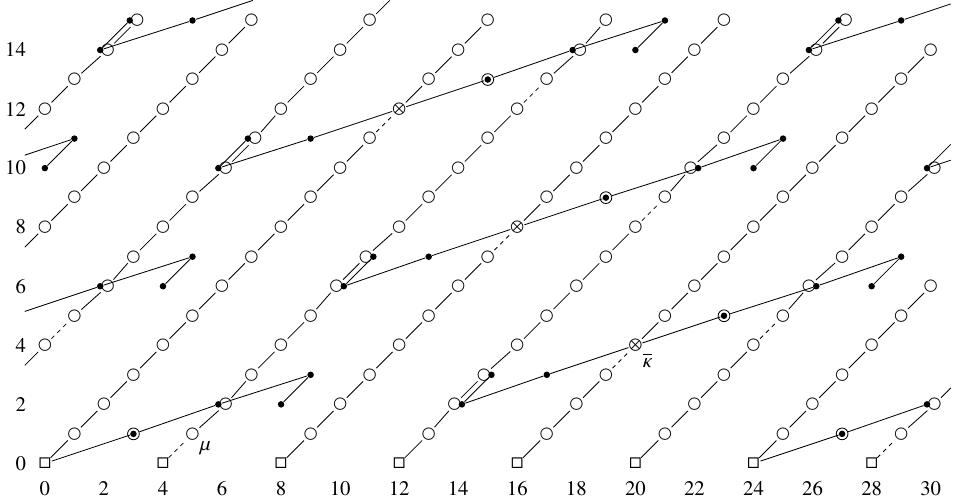}
\caption{The $E_2$ page of the spectral sequence \eqref{eq:tmf-ss}. 
The horizontal axis is $t-s$ and the vertical axis is $s.$
}
\label{figure:tmf-shifted}
 \end{figure}
The $E_2$ page is 24-periodic with periodicity generator  $\Delta\in H^0(\tmfg, E_{24}).$ This algebraic periodicity does not extend to topological periodicity since the spectral sequence \eqref{eq:tmf-ss} has differentials on the powers of $\Delta$ given by
\begin{align*}
   d_5(\Delta)&=\kappabar \nu\\
d_7(\Delta^4)&=\Delta^3\kappabar \eta^3.
\end{align*}
The differentials are linear with respect to $j$ and the generators in positive $s$ degrees.
The spectrum $\tmf$ is 192-periodic with periodicity generator detected by the permanent cycle $\Delta^8\in H^0(\tmfg, E_{192}).$ 

\subsection*{The function spectrum $F(\tmf, \eG)$}
The spectrum $F(E, \eG)$ has two group actions as given in Lemma \ref{lem:Lemma5}: the action of $\GG_2$ on the source and the residual action of $\GG_2/{\GG_2^1}$ on the target. The action on the source is used in computing the $E_2$ page of the fixed  point spectral sequence
   \[
     E_2^{*,*}= H^*(\tmfg, \pi_*F(E, \eG)) \Longrightarrow \pi_*F(\tmf, \eG).
   \]
    Since $\tmfg$ 
is  a subgroup of $\Gto,$ it acts trivially on $\GG_2/{\Gto}$ and we have 
\[
   E_2^{*,*} \cong H^*(\tmfg, \pi_*\Sigma^{-3}E) \llbracket \GG_2/{\Gto} \rrbracket .
\]

\begin{prop}\label{prop:Delta-is-perm-cycle-for-Dtmf}
Consider the
 fixed point spectral sequence
\begin{equation}\label{eq:main-spectral-sequence}
    E_2^{s,t}=H^s(G_{24}, \pi_t\Sigma^{-3}E) \llbracket \GG_2/{\Gto} \rrbracket  \Longrightarrow \pi_{t-s} F(\tmf,\eG).
\end{equation}
Assume that for some $k\in [0,7]$ (and all $n$)
\[
\Delta^{8n+k} \in E_2^{0,-3+24k+192n}=H^0(\tmfg, E_{24k+192n})\llbracket\GG_2/{\GG_2^1}\rrbracket
\]
is a permanent cycle. 
Then
\[
F(\tmf, \eG)\simeq \Sigma^{-3+24k}\tmf\llbracket\GG_2/{\GG_2^1}\rrbracket.
\]
\end{prop}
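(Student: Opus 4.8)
The plan is to exploit the module structures on the spectral sequence~\eqref{eq:main-spectral-sequence} together with the single permanent cycle supplied by the hypothesis, and to produce a comparison equivalence without computing any differentials directly. Write $\iota$ for the free generator of the $E_2$-page in bidegree $(s,t)=(0,-3)$, so that, since $\tmfg$ acts trivially on $\GG_2/\Gto$ and $F(E,\eG)\simeq\Sigma^{-3}E\br{\GG_2/\Gto}$, the $E_2$-page $H^*(\tmfg,\pi_*\Sigma^{-3}E)\br{\GG_2/\Gto}$ is free of rank one on $\iota$ over the $E_2$-page of the homotopy fixed point spectral sequence~\eqref{eq:tmf-ss} for $\tmf$, tensored with $\br{\GG_2/\Gto}$. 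The hypothesis class is then $\Delta^{8n+k}\iota$. Because $F(\tmf,\eG)$ is a module over $\tmf$, the spectral sequence~\eqref{eq:main-spectral-sequence} is a module over~\eqref{eq:tmf-ss}; in particular $\Delta^{\pm1}$, being a unit on the $E_2$-page of~\eqref{eq:tmf-ss}, acts invertibly on every page.

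Taking $n=0$ in the hypothesis, the class $\Delta^{k}\iota\in E_2^{0,-3+24k}$ is a permanent cycle and so detects some $\xi\in\pi_{-3+24k}F(\tmf,\eG)$. Using the module structures over $\tmf$ and over $F(\eG,\eG)=\eG\br{\GG_2/\Gto}$, I would extend $\xi$ to a map of modules
\[
\phi\colon \Sigma^{-3+24k}\tmf\br{\GG_2/\Gto}\longrightarrow F(\tmf,\eG)
\]
carrying the canonical generator of the free module on the left to $\xi$. The source carries the spectral sequence obtained from~\eqref{eq:tmf-ss} by shifting by $-3+24k$ and tensoring with $\br{\GG_2/\Gto}$, and its generator is the (permanent-cycle) unit; thus $\phi$ induces a map of spectral sequences to~\eqref{eq:main-spectral-sequence}.

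It then remains to verify that $\phi$ is an isomorphism on $E_2$-pages. On the filtration-zero line it sends the generator to $\Delta^{k}\iota$, and by linearity over both~\eqref{eq:tmf-ss} and the translation action of $\GG_2/\Gto$ it is identified with multiplication by $\Delta^{k}$ between two free rank-one modules; since $\Delta^{k}$ is invertible this is an isomorphism. A map of conditionally convergent spectral sequences that is an isomorphism on $E_2$ is an isomorphism on all later pages, hence on $E_\infty$, so $\phi$ induces an isomorphism on homotopy groups and is a $K(2)$-local equivalence, yielding $F(\tmf,\eG)\simeq\Sigma^{-3+24k}\tmf\br{\GG_2/\Gto}$.

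The main obstacle is bookkeeping rather than computation: one must check that $\phi$ is simultaneously a map of modules over $\tmf$ and over $F(\eG,\eG)$, so that it is genuinely a map of spectral sequences and so that the $\GG_2/\Gto$-translates of $\xi$ suffice to hit every free generator of the target, and one must confirm that~\eqref{eq:main-spectral-sequence} converges well enough to pass from an isomorphism of $E_\infty$-pages to an equivalence of spectra. The genuinely hard input --- that $\Delta^{8n+k}\iota$ is a permanent cycle for the correct value of $k$ --- is precisely the hypothesis, deferred to later sections; granting it, this argument never needs the (nonzero) differentials on $\iota$ itself, which mimic those on $\Delta^{-k}$ in~\eqref{eq:tmf-ss}.
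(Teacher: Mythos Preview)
Your proposal is correct and follows essentially the same route as the paper: both arguments use that the spectral sequence \eqref{eq:main-spectral-sequence} is a module over the standard $\tmf$-HFPSS and over $R_*=\pi_*F(\eG,\eG)$, produce a comparison map out of $\Sigma^{-3+24k}\tmf\br{\GG_2/\Gto}$ by extending the permanent cycle $\Delta^{k}$ along these module structures, and conclude it is an equivalence. The only cosmetic difference is the order of operations---the paper first reads off all differentials from the module structure (hence the isomorphism on $\pi_*$) and then writes down the map, whereas you build the map first and then observe it is an $E_2$-isomorphism; your packaging makes it clearer why only the $n=0$ case of the hypothesis is really needed (the rest follows from $\Delta^{8}$ being a permanent cycle in \eqref{eq:tmf-ss}).
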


\begin{proof}

Spectral sequence \eqref{eq:main-spectral-sequence} is a module over the standard homotopy fixed point spectral sequence  \eqref{eq:tmf-ss}.
Using this module structure we see that if $\Delta^{k+8n}$ is a permanent cycle, then any element 
\[a \in H^0(\tmfg, E_{24k+192n})\subset E_2^{0,-3+24k+192n}\]
is a permanent cycle as well.

Now we note that there is the residual action of the group $\GG_2/{\GG_2^1}$ on the target in the function spectra $F(E, \eG)$ and $F(\tmf, \eG)$, and on the homotopy fixed point spectral sequence \eqref{eq:main-spectral-sequence}, hence the differentials commute with this action. To be more explicit, consider a coset $[g]\in \GG_2/{\GG_2^1}.$
By Lemma \ref{lem:Lemma5}, part (2), the action of $[g]$ on $\pi_* F(E, \eG)=\pi_*\Sigma^{-3}E\llbracket\GG_2/{\GG_2^1}\rrbracket$ is  trivial on $\pi_*\Sigma^{-3}E$ and natural on $\GG_2/{\GG_2^1}$: $[g][h]=[gh]$. 
Hence for
 \[a[h] \in H^0(\tmfg, E_{24k+192n})\llbracket\GG_2/{\GG_2^1}\rrbracket= E_2^{0,-3+24k+192n},\]
  we have $[g].(a[h])=a[gh]$, where on the left $[g].(a[h])$ denotes the action of $[g]\in \GG_2/{\GG_2^1}$ on $a[h]$, and on the right $a[gh]$ is an element of $E_*\llbracket\GG_2/{\GG_2^1}\rrbracket.$ This allows us to write any $a[h]$ as the result of the action $a[h]=h.a[1]$ and we have
\[
d_r(a[h])=d_r([h].(a[1]))=[h].d_r(a)
\]

This shows that the differentials in this spectral sequence are linear with respect to elements in $\GG_2/{\GG_2^1}$  and we have 
\[
\pi_*F(\tmf, \eG)\cong \pi_*\Sigma^{-3+24k}\tmf\llbracket\GG_2/{\GG_2^1}\rrbracket.   
\]
Now note that (see \cite[Lemma 2.3.5]{Beh})
	\[
	\tmf\llbracket\GG_2/{\GG_2^1}\rrbracket\simeq \tmf\wedge S\llbracket\GG_2/{\GG_2^1}\rrbracket.
	\]
Then the composition
\begin{multline*}
F(\tmf, \eG)\wedge \tmf\wedge S\llbracket\GG_2/{\GG_2^1}\rrbracket \xra{\mu}\\
 \xra{\mu} F(\tmf, \eG)\wedge S\llbracket\GG_2/{\GG_2^1}\rrbracket \xra{\xi} F(\tmf, \eG),
\end{multline*}
where the map $\xi: \eG\wedge S\llbracket\GG_2/{\GG_2^1}\rrbracket\to \eG$ is the action map of \cite[Cor. 2.3.4]{Beh} and $\mu:\tmf\wedge F(\tmf, \eG)\to F(\tmf, \eG)$ is the module structure map,
gives $F(\tmf, \eG)$ the structure of a module over $\tmf\llbracket\GG_2/{\GG_2^1}\rrbracket.$
Using this module structure and  the map 
\[
\widetilde{\Delta}^k: S^{-3+24k} \to F(\tmf, \eG),
\]
detected by the permanent cycle $\Delta^k$, we get the required equivalence of spectra.
\end{proof}

The next lemma shows that we can relax the assumptions of Proposition \ref{prop:Delta-is-perm-cycle-for-Dtmf}.
\begin{lemma}\label{lemma:Delta-f(j)-enough}
   Consider the homotopy fixed point spectral sequence \eqref{eq:main-spectral-sequence}
   \[E_2^{s,t}=H^s(G_{24}, E_{t+3})\llbracket\GG_2/{\GG_2^1}\rrbracket \Longrightarrow \pi_{t-s+3} F(\tmf,\eG).\]
   Assume that $\Delta^{k}f(j) \in E_2^{0,-3+24k+192n}$ is a permanent 
cycle, where 
$f(j)$ is a power series in $j$ such that 
$f(0)\neq 0$ mod $(2).$
Then $\Delta^k $ is a permanent cycle.  
\end{lemma}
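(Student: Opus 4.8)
The plan is to exhibit $f(j)$ as an \emph{invertible permanent cycle} and then let multiplication by it do all the work: if multiplication by $f(j)$ is an automorphism of the spectral sequence \eqref{eq:main-spectral-sequence}, then $\Delta^k$ and $\Delta^k f(j)=f(j)\cdot\Delta^k$ survive or die together, which is exactly the assertion.

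First I would verify that $f(j)$ is a unit on the $E_2$-page. As $f(0)$ is a constant, the hypothesis $f(0)\neq 0 \bmod (2,j)$ simply says $f(0)\neq 0 \bmod 2$; since $W(\FF_4)$ is local with maximal ideal $(2)$, this means $f(0)$ is a unit, so the power series $f(j)$ is invertible in $W(\FF_4)\llbracket j\rrbracket\subset H^0(\tmfg, E_*)$ with inverse again a power series in $j$. Hence $f(j)$ is a unit in the coefficient ring of \eqref{eq:main-spectral-sequence}.

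Next I would show $f(j)$ is a permanent cycle. Because the differentials of \eqref{eq:tmf-ss} are $j$-linear, the Leibniz rule $d_r(jx)=d_r(j)\,x+j\,d_r(x)$ compared with $d_r(jx)=j\,d_r(x)$ forces $d_r(j)\,x=0$ for every $x$; taking $x=1$ gives $d_r(j)=0$, so $j$, every power $j^n$, and every scalar from $W(\FF_4)$ is a permanent cycle. Since the $E_2$-page is $j$-adically complete and the $j$-linear differentials are $j$-adically continuous, the infinite sums $f(j)$ and $f(j)^{-1}$ are permanent cycles as well. Multiplication by the unit permanent cycle $f(j)$ (via the module structure of \eqref{eq:main-spectral-sequence} over \eqref{eq:tmf-ss}) is then an isomorphism on $E_2$ which commutes with every $d_r$ and induces multiplication by $f(j)$ on each successive page; an isomorphism on $E_2$ commuting with all differentials is an isomorphism on every page, i.e. an automorphism of the whole spectral sequence. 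Such an automorphism carries permanent cycles bijectively to permanent cycles, so $\Delta^k=f(j)^{-1}\cdot\big(\Delta^k f(j)\big)$ is a permanent cycle, as desired. (To match the degree $-3+24k+192n$ on the nose one works with $\Delta^{8n+k}f(j)$ and uses that the topological periodicity class $\Delta^8$ is itself an invertible permanent cycle, reducing $\Delta^{8n+k}$ to $\Delta^k$.)

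The main obstacle is the completeness bookkeeping in the middle step: one must justify that the \emph{infinite} power series $f(j)$ and $f(j)^{-1}$ genuinely define permanent cycles and page-wise isomorphisms. This rests on the $j$-adic completeness of the modules $H^s(\tmfg, E_*)$ appearing on the $E_2$-page together with the $j$-adic continuity of the differentials forced by their $j$-linearity; the remaining algebra (a power series with unit constant term is a unit, and automorphisms preserve permanent cycles) is routine.
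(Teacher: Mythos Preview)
Your argument is correct and rests on the same mechanism as the paper's: the $j$-linearity of the differentials together with the invertibility of $f(j)$ when $f(0)\not\equiv 0 \bmod 2$. The paper simply writes this as the single line $0=d_r(\Delta^k f(j))=f(j)\,d_r(\Delta^k)$ and observes that $f(j)$ acts invertibly on the target, whereas you package the same fact as ``multiplication by the unit permanent cycle $f(j)$ is an automorphism of the spectral sequence''; the completeness bookkeeping you flag is exactly what the paper suppresses under the phrase ``$j$-linear''.
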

\begin{proof}
The spectral sequence \eqref{eq:main-spectral-sequence} is a module over the standard homotopy fixed point spectral sequence for $\pi_*\tmf$ and the differentials in the latter spectral sequence are $j$-linear. Hence we have
\[ 
   d_r(\Delta^k f(j))=f(j)d_r(\Delta^k)=0.
\]
And the condition $f(0)\neq 0$ mod $(2)$ ensures that $f(j)$
is invertible in the target of $d_r$.
\end{proof}
In the next section we will show that $\Delta^{2+8n}f(j)$ for $f(j)$ as in Lemma \ref{lemma:Delta-f(j)-enough} is a permanent cycle in spectral sequence \eqref{eq:main-spectral-sequence}. Then we will use Proposition \ref{prop:Delta-is-perm-cycle-for-Dtmf} and Lemma \ref{lemma:Delta-f(j)-enough} to deduce $F(\tmf, \eG) \simeq \Sigma^{45}\tmf \llbracket \GG_2/{\Gto} \rrbracket .$

\section{Homotopy groups computation}\label{sec:htpy-computation}
We begin by examining the   homotopy fixed point 
spectral sequence for $\pi_*\tmf$ (\cite[Fig. 4]{BG} or \cite[p. 32]{Tilman})
   and making the following observation. 
\begin{lemma}\label{lemma:f_4}
For any $n$ there is an isomorphism
\[
   \pi_{45+192n}\tmf \cong \FF_4.
\]
If $a_n\in \pi_{45+192n}\tmf$ is a generator
then $a_n\kappabar\eta\neq 0.$ 
Furthermore, $a_n$ is detected by the class
$\Delta^{1+8n}\kappabar\eta \in H^5(\tmfg, \pi_{50+192n}E).$
\end{lemma}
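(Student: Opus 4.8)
The plan is to reduce to the case $n=0$ and then read the stem $t-s=45$ off the homotopy fixed point spectral sequence \eqref{eq:tmf-ss}, using the explicit differentials recorded above. Since $\Delta^8 \in H^0(\tmfg, E_{192})$ is a permanent cycle and $\tmf$ is $192$-periodic, multiplication by the homotopy class detected by $\Delta^8$ induces isomorphisms $\pi_{45}\tmf \xra{\cong} \pi_{45+192n}\tmf$ carrying the class detected by $\Delta\kappabar\eta$ to the one detected by $\Delta^{1+8n}\kappabar\eta$. Thus it suffices to treat $n=0$: compute $\pi_{45}\tmf$, exhibit its generator as detected by $\Delta\kappabar\eta \in H^5(\tmfg, \pi_{50}E)$, and verify $a_0\kappabar\eta \neq 0$; the general statement then follows by naturality of the product with respect to $\Delta^{8n}$.

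First I would isolate the $E_2$-contribution in stem $45$. The class $\Delta\kappabar\eta$ lies in $H^5(\tmfg, \pi_{50}E)$, of internal degree $50$ and filtration $5$, hence total degree $45$. Its coefficient module is governed by the relations $8\kappabar=0$ and $2\eta=0$ together with the differential \eqref{eq:d3}, $d_3(c_6 c_4 \eta)=j\kappabar$: the latter kills all positive $j$-powers of $\kappabar$, and the factor of $\eta$ forces annihilation by $2$, so on $E_4$ the class $\Delta\kappabar\eta$ generates a single copy of $W(\FF_4)/2 \cong \FF_4$. I would then confirm there are no other surviving contributions in this stem by inspecting Figure \ref{figure:tmf-shifted} and tracking the differentials $d_3$, $d_5(\Delta)=\kappabar\nu$ and $d_7(\Delta^2)=\Delta\kappabar\eta^3$ through stems $44$, $45$, $46$.

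Next I would check that $\Delta\kappabar\eta$ is a permanent cycle and is not hit. The key computation is $d_5(\Delta\kappabar\eta)=d_5(\Delta)\,\kappabar\eta=\kappabar\nu\cdot\kappabar\eta=\kappabar^2\nu\eta=0$, using the relation $\eta\nu=0$; the higher differentials $d_7, d_9, \ldots$ vanish on it because their potential targets in stem $44$ at the relevant filtrations are already trivial, the only higher differential off a single power of $\Delta$ in this range being $d_7(\Delta^2)$, which does not apply. Ruling out incoming differentials from stem $46$ is immediate by the same filtration bookkeeping. Since filtration $5$ is the top of this stem, there is no extension problem, and we conclude $\pi_{45}\tmf \cong \FF_4$ with generator $a_0$ detected by $\Delta\kappabar\eta$.

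The main obstacle is the multiplicative statement $a_0\kappabar\eta \neq 0$. By the module structure over the spectral sequence \eqref{eq:tmf-ss}, $a_0\kappabar\eta$ is detected, up to higher filtration, by the product $\Delta\kappabar\eta \cdot \kappabar\eta=\Delta\kappabar^2\eta^2 \in H^{10}(\tmfg, \pi_{76}E)$ in stem $66$, so the task is to show this class is nonzero at $E_2$, survives to $E_\infty$, and detects a nonzero homotopy element with no hidden extension to zero. Nonvanishing at $E_2$ holds since no relation in $R$ annihilates $\Delta\kappabar^2\eta^2$; survival past $d_5$ again uses $\eta\nu=0$ via $d_5(\Delta\kappabar^2\eta^2)=\kappabar^3\nu\eta^2=0$; and I would rule out the remaining differentials and hidden extensions by combining the relation $c_4\kappabar=\eta^4\Delta$, which pins down the $\eta$-tower structure around $\Delta\kappabar$, with the $j$-linearity and $\Delta$-linearity of the differentials. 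This last verification, that the high-filtration product class genuinely persists to $E_\infty$ and detects $a_0\kappabar\eta$, is the delicate point and I expect it to require the most care.
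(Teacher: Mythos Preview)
Your proposal is correct, and in fact considerably more detailed than what the paper does. The paper offers no proof of this lemma at all: it is introduced as an ``observation'' read directly off the known $E_\infty$-page of the homotopy fixed point spectral sequence for $\tmf$, with a citation to \cite[Fig.~4]{BG} and \cite[p.~32]{Tilman}. In other words, the paper's argument is simply ``inspect the chart in stems $45$ and $66$''.

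Your approach instead reconstructs the relevant portion of that chart from the $E_2$-page and the listed differentials: reducing to $n=0$ by $\Delta^8$-periodicity, isolating $\Delta\kappabar\eta$ as the sole survivor in stem $45$ via $d_3(c_6c_4\eta)=j\kappabar$ and $d_5(\Delta)=\kappabar\nu$, and then arguing that $\Delta\kappabar^2\eta^2$ survives in stem $66$. This is a legitimate and more self-contained route; the steps you flag as needing care (ruling out other contributions in stem $45$, and checking survival of $\Delta\kappabar^2\eta^2$) are exactly the bookkeeping that the cited charts encode. The tradeoff is that the paper's approach is a one-line appeal to a published computation, while yours reproves a small slice of that computation; neither involves an idea the other lacks.
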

The next result follows from \cite[Prop. 2.6]{GHMR} and details can be found in \cite[p.925]{BG}. Let $H_1$ be a closed subgroup and $H_2$ a finite subgroup of $\GG_2$, and let $H_1=\cap_i U_i$ for a decreasing sequence of open subgroups $U_i.$ Then we have an isomorphism 
\begin{equation}\label{eq:decomposition}
   \pi_* F(E^{hH_1}, E^{hH_2})\cong  \lim\prod_{H_2\backslash \GG_2/{U_i}}\pi_* E^{hH_{x,i}},
\end{equation}
where $H_{x,i}=H_2\cap xU_ix^{-1}\subseteq H_2$ is the isotropy subgroup of the coset $xU_i.$ 

In order to use this decomposition we will need some information about $\pi_*E^{hH}$ for various subgroups $H\subseteq\tmfg.$ What we need is collected in the lemma below.
\begin{lemma}\label{lemma:pi-minus-one}
   \begin{enumerate}
      \item For any $H\subseteq\tmfg$ such that the central $C_2=\{\pm 1\}$ is contained in $H$ we have  \[\pi_{-1}E^{hH}=0.\]
      \item 
 For $F=C_2 \subset \tmfg$ and $C_6\subset \tmfg$ 
\[
   \pi_{45+192n}E^{hF}=\pi_{46+192n}E^{hF}=0.
   \]
   \end{enumerate}
\end{lemma}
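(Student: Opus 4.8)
The plan is to run, for each relevant finite subgroup $H\leq \tmfg$, the homotopy fixed point spectral sequence
\[
E_2^{s,t}=H^s(H,E_t)\Longrightarrow \pi_{t-s}E^{hH},
\]
and to read off the homotopy groups in the indicated stems. Two general features organize the computation. First, since $E_*=W(\FF_4)\llbracket u_1\rrbracket[u^{\pm1}]$ is concentrated in even degrees, $E_t=0$ for odd $t$; hence the only $(s,t)$ contributing to a stem $t-s=m$ are those with $t$ even and $t\equiv m\pmod 2$. Second, every subgroup in question contains the central element $-1\in\St$, which acts on $E_{2k}$ by $(-1)^k$; thus $H^*(C_2,E_*)$ carries the sign action in internal degrees $t\equiv 2\pmod 4$ and the trivial action in degrees $t\equiv 0\pmod 4$, and for general $H\supseteq C_2$ the cohomology is controlled by restriction from, and the presentation of, $H^*(\tmfg,E_*)$ recalled in Section~\ref{section:Background}, together with the Lyndon--Hochschild--Serre spectral sequence for $1\to C_2\to H\to H/C_2\to 1$.

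For part (1), I would first record the $E_2$-page in total degree $-1$: by the parity constraint these classes sit in odd filtrations $s$, and the central-$C_2$ calculation pins down $H^s(C_2,E_{s-1})$ (it is $E_{s-1}/2$ when $s\equiv 3\pmod 4$ and vanishes when $s\equiv 1\pmod4$). Transporting this through the Lyndon--Hochschild--Serre spectral sequence identifies the stem $-1$ of the $E_2$-page for each $H\in\{C_2,C_4,C_6,Q_8,\tmfg\}$ containing the central $C_2$. I would then show that every such class either supports or receives one of the differentials $d_3$, $d_5$, $d_7$; since $H\leq \tmfg$, these are $j$-linear and are determined by restricting the differentials of the $\tmfg$-spectral sequence recorded after \eqref{eq:tmf-ss}. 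The conclusion is that nothing survives to $\pi_{-1}E^{hH}$.

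For part (2), I would run the same spectral sequences for $F=C_2$ and $F=C_6$ and compute the stems $45+192n$ and $46+192n$ directly. For $C_6$ this simplifies further: the $48$-periodicity $\Sigma^{48}E^{hC_6}\simeq E^{hC_6}$ used in~\eqref{eq:BG-resolution} identifies these groups with $\pi_{-3}E^{hC_6}$ and $\pi_{-2}E^{hC_6}$, exhibiting part (2) as another negative low-stem vanishing statement. Here one computes with the $C_3$-invariant cohomology $H^*(C_6,E_*)\cong H^*(C_2,E_*)^{C_3}$ (legitimate since $|C_3|$ is prime to $2$), and the differentials are again inherited from the $\tmfg$-spectral sequence.

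The main obstacle is the differential bookkeeping in the subgroup spectral sequences: because $E_*$ is $u$-periodic, the $E_2$-pages are nonzero in \emph{every} stem, so these are genuinely statements about differentials rather than about $E_2$. The delicate point is precisely that these subgroups behave differently from $\tmfg$ itself --- Lemma~\ref{lemma:f_4} shows $\pi_{45+192n}\tmf\cong\FF_4$ is nonzero, detected by $\Delta^{1+8n}\kappabar\eta$, so the content of part (2) is that upon restriction to $C_2$ and $C_6$ this detecting class no longer survives. Verifying that all the relevant classes in the stems $-1$, $-2$, $-3$ (equivalently $45$, $46$ modulo $192$) support or receive differentials, with careful attention to the $C_3$-action in the $C_6$ case, is where the real work lies; the rest is the formal reductions above.
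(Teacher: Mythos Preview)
Your plan is a reasonable outline, but it differs entirely from the paper's argument, and it contains a genuine gap in the step where you claim the differentials in the $H$-spectral sequences are ``determined by restricting the differentials of the $\tmfg$-spectral sequence.''

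The paper's proof is essentially a lookup. For $H\in\{C_2,C_4,C_6\}$ it simply cites Propositions~2.8, 2.9 and 2.12 of \cite{BG}, where $\pi_*E^{hH}$ is already computed, and reads off the vanishing in the relevant stems. For $H=Q_8$ it invokes the splitting
\[
E^{hQ_8}\simeq \tmf\vee \Sigma^{64}\tmf\vee \Sigma^{128}\tmf
\]
from \cite[p.~28]{HennCentralizer} and then checks $\pi_{-1}\tmf=\pi_{63}\tmf=\pi_{127}\tmf=0$ (and $\pi_{-1}\tmf=0$ handles $H=\tmfg$). Part~(2) likewise follows from the $C_2$ and $C_6$ computations in \cite{BG}. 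No spectral sequence is run in the proof itself.

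The gap in your approach is the following. Restriction $H^*(\tmfg,E_*)\to H^*(H,E_*)$ is a map of spectral sequences, so it is compatible with differentials; but it is far from surjective, and hence does not determine the differentials on classes outside its image. Concretely, the periodicity class $u^{-2}\in H^0(C_2,E_4)$ on which the decisive $d_3$ in the $C_2$-spectral sequence is supported is \emph{not} the restriction of any $\tmfg$-invariant, since $H^0(\tmfg,E_4)=0$. So you cannot deduce $d_3(u^{-2})$ from the $\tmfg$ differentials; one needs a separate argument (this is exactly what Proposition~2.8 of \cite{BG} supplies). The same issue recurs for $C_4$ and $Q_8$, where the periodicity generators again lie outside the image of restriction. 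Your Lyndon--Hochschild--Serre reduction and the parity/sign analysis of $H^*(C_2,E_*)$ are fine, but the ``differentials by restriction'' step fails, and without it the plan does not close. If repaired---by importing the actual $C_2$, $C_4$, $C_6$ differentials from \cite{BG} and the $Q_8$ splitting from \cite{HennCentralizer}---you would in effect be reproducing the paper's proof.
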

\begin{proof}
   The subgroups of $\tmfg$ which contain the central $C_2$ are $C_2, C_4, C_6, Q_8$ and $\tmfg.$ The homotopy groups of $E^{hC_2}, E^{hC_4}$ and $E^{hC_6}$ can be read off of Prop. 2.8, Prop. 2.9 and Prop. 2.12 in \cite{BG}. For $E^{hQ_8}$ we note that there is an equivalence (\cite[p.28]{HennCentralizer})
   \[
      E^{hQ_8} \simeq \tmf \vee \Sigma^{64} \tmf \vee \Sigma^{128}\tmf
   \]
and $\pi_{-1}\tmf=\pi_{63}\tmf=\pi_{127}\tmf=0.$
\end{proof}

\begin{lemma}\label{lemma:map-p-prime}
There exists (for each $n$) a surjective map
\[
   \pi_{45+192n}F(\tmf,\eS) \xra{p'} \FF_4
\]
such that any element \[{f_n}\in \pi_{45+192n}F(\tmf,\eS),\] for which  $p'({f_n})\neq 0,$ has Adams--Novikov filtration at most 5 and has the property $f_n\kappabar\eta\neq 0.$
\end{lemma}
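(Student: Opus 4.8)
The plan is to realize $p'$ as a composite of two natural maps landing in $\pi_{45+192n}\tmf\cong\FF_4$ (Lemma~\ref{lemma:f_4}), to deduce the two asserted properties formally, and to reserve all the real work for surjectivity. I would set
\[
p'\colon \pi_{45+192n}F(\tmf,\eS)\xrightarrow{\,(\eS\to\tmf)_*\,}\pi_{45+192n}F(\tmf,\tmf)\xrightarrow{\,\mathrm{ev}_1\,}\pi_{45+192n}\tmf\cong\FF_4,
\]
where the first arrow is induced by the bottom map $\eS\to\tmf$ of the tower~\eqref{eq:my-tower} and $\mathrm{ev}_1$ is restriction along the unit $u\colon S^0\to\tmf$, i.e.\ $F(\tmf,\tmf)\to F(S^0,\tmf)=\tmf$. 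Left multiplication $L\colon\tmf\to F(\tmf,\tmf)$ splits $\mathrm{ev}_1$, so with $a_n$ the generator of Lemma~\ref{lemma:f_4} the class $\tilde a_n:=L(a_n)$ satisfies $\mathrm{ev}_1(\tilde a_n)=a_n$.

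Both properties are then immediate. Since $p'$ is induced by maps of spectra it is $\pi_*S^0$-linear and does not lower Adams--Novikov filtration; as every nonzero element of $\pi_{45+192n}\tmf\cong\FF_4$ is detected by $\Delta^{1+8n}\kappabar\eta$ in filtration exactly $5$, any $f_n$ with $p'(f_n)\neq0$ has filtration $\leq\mathrm{filt}(p'(f_n))=5$. Likewise $\kappabar\eta\in\pi_{21}S^0$ acts naturally, so $p'(f_n\kappabar\eta)=p'(f_n)\kappabar\eta$, which is nonzero by Lemma~\ref{lemma:f_4} and $\FF_4$-linearity; hence $f_n\kappabar\eta\neq0$.

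It remains to prove surjectivity, which I would do by showing $\tilde a_n$ lifts along $(\eS\to\tmf)_*$; equivalently, that $\tilde a_n$ is a permanent cycle in the tower spectral sequence $E_1^{t,s}=\pi_tF(\tmf,F_s)\Rightarrow\pi_{t-s}F(\tmf,\eS)$ of~\eqref{eq:towerSS-function-spectrum}. As $F_0=\tmf$ lies in filtration $0$ with no incoming differentials, I only need $d_1\tilde a_n=d_2\tilde a_n=d_3\tilde a_n=0$. Using Proposition~2.6 of~\cite{GHMR} to write each $F(\tmf,E^{hH})\cong\holim_i\prod_{H\backslash\GG_2/U_i}E^{hH_{x,i}}$ with $H_{x,i}\le H$, the $d_1$-target $E_1^{44+192n,1}\cong\pi_{45+192n}F(\tmf,E^{hC_6})$ vanishes (the $e$- and $C_3$-stabilizers contribute only in even degrees, while the $C_2$- and $C_6$-stabilizers vanish by Lemma~\ref{lemma:pi-minus-one}(2)), and the $d_3$-target $E_1^{44+192n,3}\cong\pi_{-1+192n}F(\tmf,\tmf)$ vanishes (again odd degree kills the $e,C_3$ contributions, and $\pi_{-1+192n}E^{hH}=0$ for the $C_2$-containing stabilizers by Lemma~\ref{lemma:pi-minus-one}(1) together with the computations of $\pi_*E^{hH}$ in~\cite{BG}, e.g.\ $\pi_{-1}\tmf=\pi_{63}\tmf=\pi_{127}\tmf=0$ for $\tmfg$ and $Q_8$).

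The main obstacle is $d_2$, since its target $E_1^{44+192n,2}\cong\pi_{46+192n}F(\tmf,E^{hC_6})$ is genuinely nonzero: the $e$- and $C_3$-stabilizers contribute the even groups $\pi_{46+192n}E$ and $\pi_{46+192n}E^{hC_3}$. To kill $d_2\tilde a_n$ I would argue by filtration. All of these contributions sit in internal homotopy--fixed--point filtration $0$, because the trivial and $C_3$ stabilizers have no higher cohomology $2$-locally; hence, in the combined spectral sequence obtained by refining each tower layer by its homotopy fixed point filtration, every class of this group has total filtration $2$. On the other hand $\tilde a_n$ is detected by $\Delta^{1+8n}\kappabar\eta$ in total filtration $5$, and combined differentials strictly raise total filtration, so nothing in filtration $5$ can map to filtration $2$; thus $d_2\tilde a_n=0$. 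Equivalently, one packages this as $\kappabar\eta$-linearity: $\tilde a_n=\kappabar\eta\cdot\Delta^{1+8n}$ on the $E_2$-page, and $\kappabar\eta$ annihilates the filtration-$0$ target (on which $\eta$ acts trivially), so $d_2\tilde a_n=\kappabar\eta\cdot d_2(\Delta^{1+8n})=0$. Making this comparison precise---in particular checking that the tower $d_2$ is controlled by the total filtration---is the delicate point; granting it, $\tilde a_n$ survives, lifts to some $f_n$ with $(\eS\to\tmf)_*(f_n)=\tilde a_n$, and $p'(f_n)=a_n\neq0$, proving surjectivity.
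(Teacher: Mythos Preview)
Your overall strategy coincides with the paper's: define $p'$ as the composite of the edge map $\pi_{45+192n}F(\tmf,\eS)\to\pi_{45+192n}F(\tmf,\tmf)$ from the tower spectral sequence~\eqref{eq:towerSS-function-spectrum} with restriction along the unit, and deduce the filtration and $\kappabar\eta$ properties from Lemma~\ref{lemma:f_4}. The paper proves surjectivity in exactly this way, by showing $E_1^{0,45+192n}=E_\infty^{0,45+192n}$ in the tower spectral sequence.

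The genuine gap in your argument is the claim that ``the $e$- and $C_3$-stabilizers contribute'' to $\pi_{46+192n}F(\tmf,E^{hC_6})$, making the $d_2$-target nonzero. This is false: such stabilizers never occur. In the decomposition $F(\tmf,E^{hC_6})\simeq\prod_{x\in C_6\backslash\GG_2/\tmfg}E^{hH_x}$ with $H_x=C_6\cap x\tmfg x^{-1}$, the central element $-1\in\GG_2$ lies in $\tmfg$ and hence in every conjugate $x\tmfg x^{-1}$, so $C_2=\{\pm1\}\subseteq H_x$ for all $x$. Thus the only possible stabilizers are $C_2$ and $C_6$, and Lemma~\ref{lemma:pi-minus-one}(2) gives $\pi_{46+192n}E^{hC_2}=\pi_{46+192n}E^{hC_6}=0$, whence $\pi_{46+192n}F(\tmf,E^{hC_6})=0$. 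The same observation streamlines your $d_1$ and $d_3$ arguments: for $F(\tmf,\tmf)$ the stabilizers $\tmfg\cap x\tmfg x^{-1}$ again all contain the central $C_2$, so only the subgroups listed in Lemma~\ref{lemma:pi-minus-one}(1) appear, and there is no need to separately dispose of hypothetical $e$- or $C_3$-contributions.

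With this correction all three differential targets vanish outright, so the entire group $E_1^{0,45+192n}$ consists of permanent cycles and the edge map $p$ is surjective; your proposed ``total filtration'' argument for $d_2$ (which you yourself flag as delicate) is unnecessary. This is precisely the paper's proof.
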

\begin{proof}
 We compute with the tower spectral sequence \eqref{eq:towerSS-function-spectrum}
  \begin{equation*}
        E_1^{s,t}=\pi_tF(\tmf,\mathfrak{F}_s) \Longrightarrow \pi_{t-s}F(\tmf, \eS).
 \end{equation*} 
 where we examine the fate of 
 \[
 E_1^{0,{45+192n}}\cong \pi_{45+192n}F(\tmf, \tmf).
 \]
The three potential differentials supported by $E_1^{0,{45+192n}}$ land in 
\begin{align*}
\pi_{45+192n}F(\tmf, E^{hC_6})&=\pi_{46+192n} F(\tmf, E^{hC_6})=\\
&=\pi_{47+192n} F(\tmf, \Sigma^{48}\tmf)=0,  
\end{align*}
all of which are zero groups by  \eqref{eq:decomposition} and Lemma \ref{lemma:pi-minus-one}.
\noindent
Then 
$E_1^{0, {45+192n}}\cong E_{\infty}^{0, {45+192n}}$,
and the projection $p$ from the top to the bottom of the duality tower \eqref{eq:my-tower}
\[
   \pi_{45+192n}F(\tmf, \eS) \xra{p} \pi_{45+192n}F(\tmf,\tmf)
\]
is surjective.
Composing it with the unit map $\iota$  of the ring spectrum $\tmf$ we have a surjective map $p'$
\[
   p': \pi_{45+192n}F(\tmf, \eS) \xra{p} \pi_{45+192n}F(\tmf,\tmf)\xra{\iota} \pi_{45+192n}\tmf\cong \FF_4.
\]
The rest follows from Lemma \ref{lemma:f_4}.
\end{proof}

\begin{corollary}\label{corollary:important}
In the spectral sequence \eqref{eq:main-spectral-sequence}
\begin{equation*}
E_2^{s,t}=H^s(G_{24}, \pi_t\Sigma^{-3}E\llbracket\GG_2/{\GG_2^1}\rrbracket )\Longrightarrow \pi_{t-s} F(\tmf,\eG).   
\end{equation*}
there exists (for each $n$) a permanent cycle
\[
   \Delta^{2+8n} g_n(j) \in E_{2}^{0,-3+48+192n}=H^0(\tmfg, E_{-3+48+192n}) \llbracket \GG_2/{\GG_2^1} \rrbracket 
\]
such that $g_n(0)\neq 0$ mod $(2).$
\end{corollary}

\begin{proof}
We use the fact that
$F(\tmf, \eS)=F(\tmf, \eG)\wedge \Gal(\FF_4/{\FF_2})_+$ (Lemma 1.37 in \cite{BG}).
Then the map 
	\[
	p': \pi_{45+192n}F(\tmf, \eS) \to \FF_4
	\]
from Lemma \ref{lemma:map-p-prime} restricts to a surjective map
\[
r:\pi_{45+192n}F(\tmf, \eG) \to \FF_2    
\]
where any $y_n\in \pi_{45+192n}F(\tmf, \eG)$ such that $r(y_n)\neq 0$ is detected in the spectral sequence \eqref{eq:main-spectral-sequence} by a permanent cycle in Adams-Novikov filtration at most 5. 

Now we analyze the spectral sequence 
\[
E_2^{s,t}=H^s(G_{24}, \pi_t\Sigma^{-3}E\llbracket\GG_2/{\GG_2^1}\rrbracket)\cong 
H^s(\tmfg, \pi_{t+3}E) \llbracket \GG_2/{\GG_2^1} \rrbracket.
\]
For the $E_2$ page see Figure \ref{figure:tmf-shifted} (the $E_2$ page is 24-periodic with respect to the  $t-s$ axis), and for the $E_{\infty}$ page, see, for example, \cite[Fig. 4]{BG}. From these we deduce that $y_n$ with the property $y_n\kappabar\eta\neq 0$ (and having filtration less than 5) must be detected by an element in filtration zero, namely in \[H^0(\tmfg, E_{48+192n}) \llbracket \GG_2/{\GG_2^1} \rrbracket ,\] hence 
\[
   y_n=\Delta^{2+8n} g_n(j)
\]
for some $g_n(j).$ Then the condition $y_n\kappabar\eta\neq 0$ guarantees that $g_n(0)\neq 0$ mod $(2)$ (see Theorem 4.6 in \cite{BG}).
\end{proof}

\begin{proposition}\label{prop:main}
There is a $K(2)$-local equivalence 
       \[F(\tmf, E^{h\Gto})\simeq \Sigma^{45}\tmf\br{\G/{\Gto}}.\]
\end{proposition}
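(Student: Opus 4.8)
The plan is to assemble the machinery built up in the preceding lemmas and proposition, since all of the genuine computational difficulty has already been isolated into Lemma \ref{lemma:important}. The strategy rests on the observation that the claimed suspension $\Sigma^{45}$ corresponds exactly to the choice $k=2$ in Proposition \ref{prop:Delta-is-perm-cycle-for-Dtmf}: indeed $-3 + 24\cdot 2 = 45$, so it suffices to verify the hypothesis of that proposition with $k=2$, namely that $\Delta^{8n+2}$ is a permanent cycle in the spectral sequence \eqref{eq:main-spectral-sequence} for every $n$.

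First I would invoke Lemma \ref{lemma:important}, which produces, for each $n$, a permanent cycle $\Delta^{2+8n} g_n(j) \in E_2^{0,-3+48+192n}$ with $g_n(0) \neq 0 \bmod (2,j)$. This is precisely the form of permanent cycle required by the hypothesis of Lemma \ref{lemma:Delta-f(j)-enough}, taking the power series there to be $f(j) = g_n(j)$. Applying that lemma then upgrades the conclusion to the statement that $\Delta^{2+8n}$ itself is a permanent cycle: by $j$-linearity of the differentials in the module spectral sequence \eqref{eq:tmf-ss}, together with the invertibility of $g_n(j)$ guaranteed by $g_n(0) \neq 0 \bmod (2,j)$, one gets $d_r(\Delta^{2+8n}) = 0$ for all $r$.

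Finally I would feed this into Proposition \ref{prop:Delta-is-perm-cycle-for-Dtmf} with $k = 2$. That proposition then yields the desired $E$-module equivalence
\[
F(\tmf, \eG) \simeq \Sigma^{-3+48}\tmf\br{\G/{\Gto}} = \Sigma^{45}\tmf\br{\G/{\Gto}},
\]
where the explicit equivalence is built from the map $\Delta^2 : S^{45} \to F(\tmf, \eG)$ detecting the permanent cycle, composed with the module structure of $F(\tmf, \eG)$ over $\tmf$ and the augmentation, exactly as in the last paragraph of the proof of Proposition \ref{prop:Delta-is-perm-cycle-for-Dtmf}.

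The only real content of this proposition is bookkeeping: one checks that the single choice $k=2$ simultaneously matches the output of Lemma \ref{lemma:important} (the permanent cycle sits in $E_2^{0,-3+48+192n}$, the bidegree of $\Delta^{2+8n}$ after the $\Sigma^{-3}$ shift) and produces the target suspension $-3 + 24\cdot 2 = 45$. The genuine obstacle — exhibiting the permanent cycle $\Delta^{2+8n} g_n(j)$ in the first place — has already been surmounted in Lemma \ref{lemma:important}, where the surjection $r$ onto $\FF_2$ coming from the duality tower \eqref{eq:my-tower} and the nonvanishing $y_n\kappabar\eta \neq 0$ were used to pin down both the Adams--Novikov filtration and the $j$-adic leading term of the detecting class. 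Consequently I expect the proof itself to be short, amounting to a careful citation of these three results in sequence.
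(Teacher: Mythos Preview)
Your proposal is correct and matches the paper's proof essentially verbatim: the paper's argument consists of exactly the three-step citation you outline---Lemma~\ref{lemma:important} provides the permanent cycle $\Delta^{2+8n}g_n(j)$, Lemma~\ref{lemma:Delta-f(j)-enough} strips off the unit power series $g_n(j)$, and Proposition~\ref{prop:Delta-is-perm-cycle-for-Dtmf} with $k=2$ converts this into the equivalence $F(\tmf,\eG)\simeq\Sigma^{45}\tmf\br{\G/\Gto}$. Your anticipation that the proof is short bookkeeping is spot on.
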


\begin{proof}
We, again, compute with the fixed point spectral sequence \eqref{eq:main-spectral-sequence}
\[
  E_2^{s,t} =H^s(\tmfg, \pi_tF(E, \eG))\Longrightarrow \pi_{t-s}F(E, \eG)^{\tmfg}\cong \pi_{t-s}F(\tmf, \eG).
\]
   By Corollary \ref{corollary:important}, $\Delta^{2+8n}g_n(j)$ is a permanent cycle in this spectral sequence for each $n$ and $g_n(0)\neq 0.$ Then we apply Proposition \ref{prop:Delta-is-perm-cycle-for-Dtmf}.
\end{proof}

Now we are ready to prove our main theorem. Let $\xi$ be the canonical topological generator of $\ZZ_2\cong \GG_2/{\GG_2^1}$ and recall that there
 exists a fiber sequence
\begin{equation}\label{eq:psi-fiber-sequence}
      \eG \xra{\xi-1} \eG \to \Sigma L_{K(2)}S^0,
\end{equation}
where the map $\xi$ is given by the residual action of $\xi \in \GG_2/{\GG_2^1}$ on $\eG$.

 \begin{thmm}
  Let $n=p=2$  and let $\dtmfg$ be the maximal finite
   subgroup of the Morava stabilizer group $\GG_2.$ Then the $K(2)$-local Spanier--Whitehead dual of $\dtmf$ is \[D\dtmf=F(\dtmf, L_{K(2)}S^0)\simeq \Sigma^{44}\dtmf.\]
 \end{thmm}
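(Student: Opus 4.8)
The plan is to reduce the computation of $D\dtmf$ to Proposition~\ref{prop:main} via Galois descent, and then to extract the dual from the defining fiber sequence \eqref{eq:psi-fiber-sequence}.

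First I would upgrade Proposition~\ref{prop:main} from $\tmfg$ to $\dtmfg$. Since $\dtmfg=\tmfg\rtimes\Gal(\FF_4/\FF_2)$ we have $\dtmf\simeq\tmf^{hC_2}$ for $C_2=\Gal(\FF_4/\FF_2)$, and because the $K(2)$-local Tate construction for the finite group $C_2$ vanishes (as in the proof of Lemma~\ref{lem:Tate_vanishes}) we may replace the homotopy fixed points by homotopy orbits, $\tmf^{hC_2}\simeq\tmf_{hC_2}$. The adjunction $F(X_{hC_2},Y)\simeq F(X,Y)^{hC_2}$ together with Proposition~\ref{prop:main} then gives
\[
  F(\dtmf,\eG)\simeq F(\tmf,\eG)^{hC_2}\simeq\bigl(\Sigma^{45}\tmf\br{\G/\Gto}\bigr)^{hC_2}.
\]
The equivalence of Proposition~\ref{prop:main} is $\G$-equivariant by the equivariance recorded in Proposition~\ref{prop:first-equivalence}, so it is in particular $C_2$-equivariant. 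The point to check is that the residual $C_2=\Gal(\FF_4/\FF_2)$-action on the indexing set $\G/\Gto$ is trivial: this action is by translation through $\Gal(\FF_4/\FF_2)\hookrightarrow\G\to\G/\Gto$, and since the reduced norm sends a Galois element to the identity we have $\Gal(\FF_4/\FF_2)\subset\Gto$, so the image in $\G/\Gto$ is trivial. Hence $C_2$ acts only on the $\tmf$-factor and
\[
  F(\dtmf,\eG)\simeq\Sigma^{45}\dtmf\br{\G/\Gto}.
\]

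Next I would apply $F(\dtmf,-)$ to the fiber sequence \eqref{eq:psi-fiber-sequence}. Because $F(\dtmf,-)$ preserves fiber sequences this yields
\[
  F(\dtmf,\eG)\xra{\psi-1}F(\dtmf,\eG)\to\Sigma F(\dtmf,L_{K(2)}S^0)=\Sigma D\dtmf,
\]
so that $\Sigma D\dtmf$ is the cofiber of $\psi-1$ acting on $F(\dtmf,\eG)\simeq\Sigma^{45}\dtmf\br{\G/\Gto}$. By Proposition~\ref{prop:first-equivalence} the residual $\G/\Gto$-action on the target copy of $\eG$ corresponds to translation on the completed group ring $\br{\G/\Gto}$. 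Identifying $\G/\Gto\cong\ZZ_2$ and the Iwasawa algebra $\Z\br{\ZZ_2}\cong\Z\br{T}$ with $T=[\psi]-1$, the self-map $\psi-1$ becomes multiplication by $T$, whose cokernel is $\Z\br{T}/(T)\cong\Z$. Thus the cofiber of $\psi-1$ on $\Sigma^{45}\dtmf\br{\G/\Gto}$ is $\Sigma^{45}\dtmf$, giving $\Sigma D\dtmf\simeq\Sigma^{45}\dtmf$ and therefore $D\dtmf\simeq\Sigma^{44}\dtmf$.

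The main obstacle is the first step: one must verify carefully that the equivalence of Proposition~\ref{prop:main} is genuinely $\Gal(\FF_4/\FF_2)$-equivariant and that the residual Galois action on $\G/\Gto$ is trivial, so that the homotopy fixed points commute with the completed group ring and produce $\dtmf$ rather than some twisted form. Once this equivariant bookkeeping is in place, the passage through \eqref{eq:psi-fiber-sequence} and the identification of the cofiber with $\Sigma^{45}\dtmf$ are formal.
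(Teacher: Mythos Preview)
Your argument is correct and uses the same two ingredients as the paper—Proposition~\ref{prop:main} and the fiber sequence \eqref{eq:psi-fiber-sequence}—but applies them in the opposite order. The paper first maps $\tmf$ (not $\dtmf$) into \eqref{eq:psi-fiber-sequence}, obtains $D\tmf\simeq\Sigma^{44}\tmf$ directly from Proposition~\ref{prop:main} and the cofiber computation, and only at the very end invokes the $\Gal(\FF_4/\FF_2)$-equivariant equivalence $\Gal_+\wedge\dtmf\simeq\tmf$ of \cite[Lemma~1.37]{BG} to descend to $\dtmf$. You instead descend first, upgrading Proposition~\ref{prop:main} to an equivalence $F(\dtmf,\eG)\simeq\Sigma^{45}\dtmf\br{\G/\Gto}$, and then run the fiber sequence argument for $\dtmf$.

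The practical difference is where the equivariant bookkeeping lands. In your order you must check that the particular equivalence produced in the proof of Proposition~\ref{prop:main}—built from the class $\Delta^2$ and the $\tmf$-module structure—is $\Gal(\FF_4/\FF_2)$-equivariant; appealing to Proposition~\ref{prop:first-equivalence} alone is not quite enough, since that proposition concerns $\pi_*F(E,E)$ rather than the specific map $\Sigma^{45}\tmf\br{\G/\Gto}\to F(\tmf,\eG)$. The paper's order sidesteps this: it never needs equivariance of Proposition~\ref{prop:main}, only the off-the-shelf equivariant splitting of $\tmf$ from \cite{BG}. Your observation that $\Gal(\FF_4/\FF_2)\subset\Gto$ (so the translation action on $\G/\Gto$ is trivial) is correct and is the key point either way. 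Both routes arrive at the same place; the paper's is slightly more economical in what it asks of Proposition~\ref{prop:main}.
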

\begin{proof}
We map $\tmf$ into the fiber sequence \eqref{eq:psi-fiber-sequence} to get

\begin{equation}\label{eq:first-fiber-seq}
F(\tmf, \eG) \xra{\xi-1} F(\tmf, \eG)\to \Sigma D\tmf.
\end{equation}
  
The map $\xi$ in this fiber sequence is the action of $\xi\in \GG_2/{\GG_2^1}$ on the target in the function spectrum, given by Lemma \ref{lem:Lemma5}.

Now consider the fiber sequence (see \cite{Beh}, Lemma 2.3.8)
\begin{equation}\label{eq:second-fiber-seq}
   \Sigma^{45}\tmf \llbracket\ZZ_2\rrbracket \xra{\tau-1} \Sigma^{45}\tmf\llbracket\ZZ_2\rrbracket \ra \Sigma^{45}\tmf,
\end{equation}
where the map $\tau$ is given by the action of the canonical generator $\tau \in \ZZ_2$ on $\ZZ_2$.
By Proposition \ref{prop:main} the first two terms in the fiber sequences \eqref{eq:first-fiber-seq} and \eqref{eq:second-fiber-seq} are equivalent and by 
 Lemma \ref{lem:Lemma5} the maps $\xi-1$ and $\tau-1$ are equivalent, hence the cofibers are equivalent as well and we have 
$D\tmf \simeq \Sigma^{44}\tmf.$

Now we use Lemma 1.37 from  \cite{BG} which implies that there is a $\Gal(\FF_4/{\FF_2})$-equivariant equivalence
\[
   \Gal(\FF_4/{\FF_2})_{+}\wedge \dtmf \xra{\simeq} \tmf
\]
and get \[
           D\dtmf \simeq \Sigma^{44}\dtmf.
           \qedhere
        \]    
\end{proof}

\section{Spanier--Whitehead dual of $E^{hF}$ for $F\subseteq G_{48}$}
\begin{lemma}
Let $F$ be any finite subgroup of $\GG_2.$ Then we have an equivalence
$$
 DE^{hF} \simeq (DE)^{hF}.
$$
\end{lemma}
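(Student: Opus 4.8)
The plan is to run the argument of Lemma~\ref{lem:Tate_vanishes} essentially verbatim, simply replacing the target spectrum $E^{h\GG_2^1}$ by $L_{K(2)}S^0$. The only geometric input required is the vanishing of the $K(2)$-localized Tate spectrum $E^{tH}$, which holds for every finite subgroup $H<\GG_2$ and is the same fact used in the proof of Lemma~\ref{lem:Tate_vanishes}. This vanishing supplies a norm equivalence between homotopy orbits and homotopy fixed points,
\[
   E_{hH}\xra{\simeq} E^{hH},
\]
so that the source of the dual $DE^{hH}=F(E^{hH},L_{K(2)}S^0)$ can be replaced by $E_{hH}$.

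The second ingredient I would use is the purely formal fact that the $K(2)$-local duality functor $D(-)=F(-,L_{K(2)}S^0)$ is contravariant and carries homotopy colimits to homotopy limits. Writing the homotopy orbit spectrum as a homotopy colimit over the classifying groupoid, $E_{hH}=\hocolim_{BH}E$, the function spectrum out of it becomes a homotopy limit,
\[
   F(E_{hH},L_{K(2)}S^0)\simeq \holim_{BH}F(E,L_{K(2)}S^0)=(DE)^{hH},
\]
where the $H$-action on $DE=F(E,L_{K(2)}S^0)$ is the one induced contravariantly from the action of $H$ on the source copy of $E$. Concatenating the two equivalences then gives
\[
   DE^{hH}=F(E^{hH},L_{K(2)}S^0)\simeq F(E_{hH},L_{K(2)}S^0)\simeq (DE)^{hH},
\]
which is the claim.

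The steps requiring care are bookkeeping rather than a genuine obstacle: I would make sure every construction is carried out in the $K(2)$-local category, so that $E^{tH}\simeq \ast$ is available and $D$ denotes the local Spanier--Whitehead dual, and I would verify that the $H$-action appearing in $(DE)^{hH}$ is exactly the one induced from the $H$-action on $E$. Both checks are immediate from the identifications above. In short, the essential work is already contained in the Tate-vanishing statement used for Lemma~\ref{lem:Tate_vanishes}, and the present lemma follows as a formal consequence.
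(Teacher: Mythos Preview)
Your proposal is correct and follows exactly the same approach as the paper: invoke Tate vanishing to replace $E^{hH}$ by $E_{hH}$, and then use the formal identification $F(E_{hH},L_{K(2)}S^0)\simeq F(E,L_{K(2)}S^0)^{hH}$. The paper's proof is a one-line reference back to Lemma~\ref{lem:Tate_vanishes} with $E^{h\GG_2}$ in place of $E^{h\GG_2^1}$, and your write-up is just a slightly more expanded version of the same argument.
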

\begin{proof}
The proof goes exactly the same way as for Lemma \ref{lem:Tate_vanishes}. Since  $F$ is finite, the Tate spectrum vanishes, and we have $E^{hF}\simeq E_{hF}.$ Then
\[
 F(E^{hF}, E^{h\mathbb{G}_2})\simeq F(E_{hF}, E^{h\mathbb{G}_2}) \simeq F(E, E^{h\mathbb{G}_2})^{hF}.
 \qedhere
\]
 \end{proof}
The results of the previous section can be reformulated as the following statement.
\begin{cor}
   In the homotopy fixed point spectral sequence 
   \[
      E_2^{s,t}=H^s(\dtmfg, \pi_tDE) \Longrightarrow \pi_{t-s}(DE)^{h\dtmfg}\simeq D\dtmf
   \]
the class $\Delta^2\in H^0(\dtmfg, \pi_{44}DE)\cong H^0(\dtmfg, \pi_{48}E)$ is a permanent cycle.
\end{cor}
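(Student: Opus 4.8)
The plan is to read the corollary off of Theorem 1 by analyzing the edge (filtration-zero line) of the spectral sequence in total degree $44$. By the preceding lemma we identify $D\dtmf$ with $(DE)^{h\dtmfg}$, and by Strickland's equivalence $\pi_tDE\cong\pi_{t+4}E$ the spectral sequence takes the form $E_2^{s,t}=H^s(\dtmfg,E_{t+4})\Longrightarrow\pi_{t-s}D\dtmf$. The first step is to pin down the relevant edge group. From the presentation of $H^0(\dtmfg,E_*)$ recalled in Section~\ref{section:Background}, together with $\Delta j=c_4^3$ and $c_4^3-c_6^2=(12)^3\Delta$ (so that $c_6^2\Delta=(j-1728)\Delta^2$ and $c_6^4=(j-1728)^2\Delta^2$), one checks that $E_2^{0,44}=H^0(\dtmfg,E_{48})$ is the free rank-one $\ZZ_2\br{j}$-module on $\Delta^2$.

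The second step uses Theorem 1 to control the abutment. The equivalence $D\dtmf\simeq\Sigma^{44}\dtmf$ is one of $\dtmf$-modules, so $\pi_{44}D\dtmf\cong\pi_0\dtmf\cong\ZZ_2\br{j}$, generated by the image $g$ of the unit of $\dtmf$. Since the unit of $\dtmf$ is detected in filtration zero of the homotopy fixed point spectral sequence for $\dtmf$, the class $g$ is detected in filtration zero of the spectral sequence for $D\dtmf$; hence all of $\pi_{44}D\dtmf$ is concentrated on the edge and $E_\infty^{0,44}\cong\ZZ_2\br{j}$. Its generator, the detecting class $\bar g$, therefore lies in $E_2^{0,44}=\ZZ_2\br{j}\{\Delta^2\}$ and spans a free rank-one submodule, so $\bar g=w(j)\Delta^2$ for some $w(j)\neq0$.

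The main obstacle is the final step: showing that $w(j)$ is a unit, equivalently that $\bar g\not\equiv0\bmod(2,j)$, which is precisely the assertion that $\Delta^2$ (rather than some $j^k\Delta^2$) survives. A dimension count cannot settle this, since $\ZZ_2\br{j}$ has proper free rank-one submodules. I would instead import the mechanism of Lemma~\ref{lemma:important}: the generator $g$ satisfies $g\kappabar\eta\neq0$, since it corresponds to the unit of $\Sigma^{44}\dtmf$ and $\kappabar\eta\neq0$ in $\pi_*\dtmf$. The spectral sequence for $D\dtmf$ is a module over that for $\dtmf$, whose differentials are $j$-linear, and by~\eqref{eq:d3} we have the $d_3$-boundary $d_3(c_6c_4\eta)=j\kappabar$. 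Thus if $\bar g$ were divisible by $j$, say $\bar g=j\,h(j)\Delta^2$, then (using that $\Delta^2$, $\eta$, and power series in $j$ are $d_3$-cycles) the detecting class $\bar g\kappabar\eta=h(j)\Delta^2\eta\,(j\kappabar)=\pm\,d_3\!\big(h(j)\Delta^2\eta\,c_6c_4\eta\big)$ would be a $d_3$-boundary and hence die, contradicting $g\kappabar\eta\neq0$. Therefore $w(0)\neq0\bmod2$, $w(j)$ is invertible in $\ZZ_2\br{j}$, and $E_\infty^{0,44}=E_2^{0,44}$.

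Once $\Delta^2$ is known to be a permanent cycle the corollary follows. I expect the only genuinely delicate point to be this non-$j$-divisibility of the detecting class, which is why the argument routes through the $\kappabar\eta$ and $d_3$ obstruction rather than through a comparison of ranks; everything else is bookkeeping with the module structures over the $\dtmf$-spectral sequence and over $\ZZ_2\br{j}$.
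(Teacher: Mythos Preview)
Your argument is a detour. In the paper the corollary is not deduced from the \emph{statement} of Theorem~1 but is a restatement of what was already established in its \emph{proof}: Lemma~\ref{lemma:important} produced a permanent cycle $\Delta^{2}g(j)$ with $g(0)\not\equiv 0\bmod(2,j)$ in the spectral sequence for $F(\tmf,\eG)$, Lemma~\ref{lemma:Delta-f(j)-enough} promoted this to $\Delta^{2}$ itself, and the equivalence of Theorem~1 was then \emph{built} from that class via Proposition~\ref{prop:Delta-is-perm-cycle-for-Dtmf} and the cofiber of $\psi-1$. Passing along the map $F(\tmf,\eG)\to\Sigma D\tmf$ and through Galois carries that permanent cycle to the $\Delta^{2}$ of the corollary. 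So there is nothing further to prove: you are re-deriving, from the conclusion, an input that was already exhibited on the way.

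Your alternative route also has two genuine soft spots. First, the sentence ``since the unit of $\dtmf$ is detected in filtration zero \dots\ the class $g$ is detected in filtration zero of the spectral sequence for $D\dtmf$'' is circular: it presupposes that the equivalence $D\dtmf\simeq\Sigma^{44}\dtmf$ is compatible with the two homotopy fixed point filtrations, which is exactly what is at stake. The correct reason $g$ lands in filtration zero is that every class in $H^{s>0}(\dtmfg,E_*)$ is $2$-power torsion, while $g$ has infinite order. Second, your $\kappabar\eta$ argument only shows that the \emph{detecting class} $\bar g\kappabar\eta$ is a $d_3$-boundary; this does not force $g\kappabar\eta=0$ in homotopy, only that it is detected in higher filtration. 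In the paper's Lemma~\ref{lemma:map-p-prime} this loophole is closed by an a priori bound on the Adams--Novikov filtration coming from the duality tower, which you do not have here. (You also treat only $j$-divisibility of $w(j)$; the case $2\mid w(j)$ is easier because $2\eta=0$, but you should say so.)
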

Using this we can now compute $DE^{hF}$ for various subgroups $F \subseteq \dtmfg$. 
\begin{thm}
   Let $F\subseteq G_{48} \subset \GG_2.$ There is a $K(2)$-local equivalence 
   \[
   DE^{hF} \simeq \Sigma^{44}E^{hF}.
\]
\end{thm}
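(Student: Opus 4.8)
The plan is to bootstrap the general case from the maximal subgroup case already recorded in the Corollary, using restriction together with the module-theoretic argument of Proposition~\ref{prop:Delta-is-perm-cycle-for-Dtmf}. First I would reduce to homotopy fixed points: by the lemma above identifying $DE^{hF}\simeq (DE)^{hF}$, I would work throughout with the homotopy fixed point spectral sequence
\[
   E_2^{s,t}=H^s(F,\pi_t DE)\Longrightarrow \pi_{t-s}DE^{hF}.
\]
By Strickland's equivalence $DE=F(E,E^{h\GG_2})\simeq\Sigma^{-4}E$ of $\GG_2$-equivariant $E$-modules, restricting the action to $F$ gives an isomorphism of $F$-modules $\pi_t DE\cong\pi_{t+4}E$, and this spectral sequence is a module over the standard homotopy fixed point spectral sequence $H^s(F,\pi_tE)\Longrightarrow\pi_{t-s}E^{hF}$.

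The key step is to transport the permanent cycle $\Delta^2$ from $G_{48}$ down to $F$. The Corollary asserts that $\Delta^2\in H^0(G_{48},\pi_{44}DE)\cong H^0(G_{48},\pi_{48}E)$ is a permanent cycle. The inclusion $F\leqslant G_{48}$ induces a map of spectra $(DE)^{hG_{48}}\to (DE)^{hF}$, hence a map of homotopy fixed point spectral sequences whose effect on $E_2$-pages is the cohomology restriction $H^*(G_{48},-)\to H^*(F,-)$. Since a map of spectra carries permanent cycles to permanent cycles, the restriction of $\Delta^2$ — which is just $\Delta^2$ regarded as an element of the invariants $H^0(F,\pi_{48}E)$, and is nonzero there because $\Delta^2\neq 0$ in $E_{48}$ — is a permanent cycle in the $F$-spectral sequence.

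It then remains to promote this permanent cycle to an equivalence, exactly as in Proposition~\ref{prop:Delta-is-perm-cycle-for-Dtmf}. The class lifts to a map $\phi\colon S^{44}\to DE^{hF}$, and, using the $E^{hF}$-module structure on $DE^{hF}$ coming from the $\GG_2$-equivariant $E$-module structure on $DE$, I would form the composite
\[
   \Phi\colon \Sigma^{44}E^{hF}=E^{hF}\wedge S^{44}\xra{1\wedge\phi}E^{hF}\wedge DE^{hF}\to DE^{hF}.
\]
This is a map of $E^{hF}$-modules, so on $E_2$-pages it is a map of modules over the $E^{hF}$-spectral sequence; by linearity it is multiplication by $\Delta^2$,
\[
   H^s(F,\pi_{t-44}E)\xra{\cdot\Delta^2}H^s(F,\pi_{t+4}E)\cong H^s(F,\pi_tDE).
\]
Because $\Delta$, and hence $\Delta^2$, is a unit in $\pi_*E=E_*$, this is an isomorphism of $E_2$-pages, so $\Phi$ is a $\pi_*$-isomorphism and therefore the desired $K(2)$-local equivalence $\Sigma^{44}E^{hF}\simeq DE^{hF}$.

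The real content of the argument is entirely contained in the $G_{48}$ case; once that is granted, the remaining steps are formal. The points to watch are that restriction genuinely sends the permanent cycle to a nonzero invariant class for every $F\leqslant G_{48}$ (clear, since $\Delta^2$ is already $G_{48}$-invariant and nonzero in $E_{48}$), and that the module structure forces $\Phi$ to act by multiplication by the \emph{unit} $\Delta^2$ over the whole $E_2$-page, not merely on the bottom class. The invertibility of $\Delta^2$ in $E_*$ is precisely what makes the $E_2$-level comparison an isomorphism and thereby upgrades a single permanent cycle into a global equivalence; I expect no further obstacle beyond confirming strong convergence of both spectral sequences, which holds since the spectra involved are $K(2)$-locally dualizable.
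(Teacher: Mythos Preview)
Your proposal is correct and follows essentially the same approach as the paper: restrict the permanent cycle $\Delta^2$ from the $G_{48}$-spectral sequence (supplied by the Corollary) to the $F$-spectral sequence via the inclusion $F\leqslant G_{48}$, then use the $E^{hF}$-module structure on $DE^{hF}$ to extend this class to an equivalence. Your write-up is in fact slightly more explicit than the paper's about why the resulting module map is an equivalence (you spell out that multiplication by the unit $\Delta^2\in E_*$ is an isomorphism on $E_2$-pages), whereas the paper compresses this into the phrase ``isomorphic to a shift by 44''; but the mathematical content is identical.
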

\begin{proof}
For a subgroup $F\subseteq \dtmfg,$ let $\theta_F$ denote the inclusion map $F\xra{\theta_F} \dtmfg.$ 
It induces a map of spectral sequences 
$$\xymatrix{
   E_2^{*,*}\cong H^*(\dtmfg, \pi_*DE)\ar@{=>}[d]\ar[r]^{\theta_F} &H^*(F, \pi_*DE)\cong E_2^{*,*}\ar@{=>}[d]\\
   \pi_*(DE)^{h\dtmfg} \ar[r]^{\theta_F} &\pi_*(DE)^{hF}
}$$
and the inclusion of invariants 
\[
    E_2^{0,*}=H^0(\dtmfg, \pi_*\Sigma^{-4}E) \xra{\theta_F} H^0(F, \pi_*\Sigma^{-4}E)=E_2^{0,*}.
\]
Let $\Delta^2_F$ be the image of $\Delta^2\in H^0(\dtmfg, \pi_{44}DE)$ under $\theta_F$. Then $\Delta_F^2$ is also a permanent cycle in the fixed point spectral sequence on the right in the diagram above, 
\[
   E_2^{s,t}=H^s(F, \pi_tDE) \Longrightarrow \pi_{t-s}DE^{hF}.
\] 
Therefore, this spectral sequence is isomorphic to a shift of the homotopy fixed point spectral sequence $H^s(F, \pi_tE) \Rightarrow \pi_{t-s}E^{hF}$ by 44 and $\pi_*DE^{hF} \cong \pi_{*}\Sigma^{44}E^{hF}$.
Then, using the module structure of $DE^{hF}$ over the ring spectrum $E^{hF},$ we can extend the class $\Delta^2_F\in \pi_{44}DE^{hF}$ to the required equivalence.
\end{proof}

 \bibliographystyle{alpha}
\bibliography{bib}

\begin{thebibliography}{GHMR05}

\bibitem[Bau08]{Tilman}
T.~Bauer.
\newblock Computation of the homotopy of the spectrum {\tt tmf}.
\newblock In {\em Groups, homotopy and configuration spaces}, volume~13 of {\em
  Geom. Topol. Monogr.}, pages 11--40. Geom. Topol. Publ., Coventry, 2008.

\bibitem[BD10]{BehrensDavis}
Mark Behrens and Daniel~G. Davis.
\newblock The homotopy fixed point spectra of profinite {G}alois extensions.
\newblock {\em Trans. Amer. Math. Soc.}, 362(9):4983--5042, 2010.

\bibitem[Bea15]{BE}
Agn{\`e}s Beaudry.
\newblock The algebraic duality resolution at {$p=2$}.
\newblock {\em Algebr. Geom. Topol.}, 15(6):3653--3705, 2015.

\bibitem[Beh06]{Beh}
M.~Behrens.
\newblock A modular description of the {$K(2)$}-local sphere at the prime 3.
\newblock {\em Topology}, 45(2):343--402, 2006.

\bibitem[BG18]{BG}
Irina Bobkova and Paul~G. Goerss.
\newblock Topological resolutions in {$K(2)$}-local homotopy theory at the
  prime 2.
\newblock {\em Journal of Topology}, 11(4):917--956, 2018.

\bibitem[Bou79]{B}
A.~K. Bousfield.
\newblock The localization of spectra with respect to homology.
\newblock {\em Topology}, 18(4):257--281, 1979.

\bibitem[DH04]{DH}
Ethan~S. Devinatz and Michael~J. Hopkins.
\newblock Homotopy fixed point spectra for closed subgroups of the {M}orava
  stabilizer groups.
\newblock {\em Topology}, 43(1):1--47, 2004.

\bibitem[GH04]{GH1}
P.~G. Goerss and M.~J. Hopkins.
\newblock Moduli spaces of commutative ring spectra.
\newblock In {\em Structured ring spectra}, volume 315 of {\em London Math.
  Soc. Lecture Note Ser.}, pages 151--200. Cambridge Univ. Press, Cambridge,
  2004.

\bibitem[GHMR05]{GHMR}
P.~Goerss, H.-W. Henn, M.~Mahowald, and C.~Rezk.
\newblock A resolution of the {$K(2)$}-local sphere at the prime 3.
\newblock {\em Ann. of Math. (2)}, 162(2):777--822, 2005.

\bibitem[Hen]{HennCentralizer}
Hans-Werner Henn.
\newblock The centralizer resolution of the {$K(2)$}-local sphere at the prime
  2.
\newblock Available from
  \url{https://hal.archives-ouvertes.fr/hal-01697478/document}.

\bibitem[Hen07]{H}
H.-W. Henn.
\newblock On finite resolutions of {$K(n)$}-local spheres.
\newblock In {\em Elliptic cohomology}, volume 342 of {\em London Math. Soc.
  Lecture Note Ser.}, pages 122--169. Cambridge Univ. Press, Cambridge, 2007.

\bibitem[Hew95]{Hewett}
Thomas Hewett.
\newblock Finite subgroups of division algebras over local fields.
\newblock {\em J. Algebra}, 173(3):518--548, 1995.

\bibitem[HM07]{HahnMitchell}
Rebekah Hahn and Stephen Mitchell.
\newblock Iwasawa theory for {$K(1)$}-local spectra.
\newblock {\em Trans. Amer. Math. Soc.}, 359(11):5207--5238, 2007.

\bibitem[Hov04]{HoveyEStar}
Mark Hovey.
\newblock Operations and co-operations in {M}orava {$E$}-theory.
\newblock {\em Homology Homotopy Appl.}, 6(1):201--236, 2004.

\bibitem[Rez]{Rezk512}
Charles Rezk.
\newblock Supplementary {N}otes for {M}ath 512.
\newblock Available from
  \url{http://www.math.uiuc.edu/~rezk/512-spr2001-notes.pdf}.

\bibitem[Rez98]{RezkHMT}
Charles Rezk.
\newblock Notes on the {H}opkins-{M}iller theorem.
\newblock In {\em Homotopy theory via algebraic geometry and group
  representations ({E}vanston, {IL}, 1997)}, volume 220 of {\em Contemp.
  Math.}, pages 313--366. Amer. Math. Soc., Providence, RI, 1998.

\bibitem[Str00]{StricklandGrossHopkins}
N.~P. Strickland.
\newblock Gross-{H}opkins duality.
\newblock {\em Topology}, 39(5):1021--1033, 2000.

\end{thebibliography}
\end{document}